\documentclass[11pt]{article}
\usepackage{amssymb,amsmath,amsthm,amsfonts}
\usepackage{indentfirst}
\usepackage{amsmath}
\allowdisplaybreaks[4]
\usepackage{mathrsfs}
\usepackage{float}
\usepackage{amsfonts}
\usepackage{amssymb,amsmath,amsthm,amsfonts}
\usepackage{subfigure}
\usepackage{graphicx}
\usepackage{epstopdf}
\usepackage{color}
\usepackage{bm}
\usepackage{underscore}
\usepackage{ulem}

\usepackage{lineno}

\def\dref#1{(\ref{#1})}

\usepackage{epstopdf}

\newtheorem{theorem}{Theorem}[section]
\newtheorem{lemma}{Lemma}[section]

\numberwithin{equation}{section}

\newtheorem{remark}{Remark}[section]

\DeclareSymbolFont{txfontsA}{U}{txmia}{m}{it}
\SetSymbolFont{txfontsA}{bold}{U}{txmia}{bx}{it}
\DeclareFontSubstitution{U}{txmia}{m}{it}
\DeclareMathSymbol{\upalpha}{\mathord}{txfontsA}{"0B}
\DeclareMathSymbol{\upbeta}{\mathord}{txfontsA}{"0C}
\DeclareMathSymbol{\upgamma}{\mathord}{txfontsA}{"0D}
\DeclareMathSymbol{\updelta}{\mathord}{txfontsA}{"0E}
\DeclareMathSymbol{\upepsilon}{\mathord}{txfontsA}{"0F}
\DeclareMathSymbol{\upzeta}{\mathord}{txfontsA}{"10}
\DeclareMathSymbol{\upeta}{\mathord}{txfontsA}{"11}
\DeclareMathSymbol{\uptheta}{\mathord}{txfontsA}{"12}
\DeclareMathSymbol{\upiota}{\mathord}{txfontsA}{"13}
\DeclareMathSymbol{\upkappa}{\mathord}{txfontsA}{"14}
\DeclareMathSymbol{\uplambda}{\mathord}{txfontsA}{"15}
\DeclareMathSymbol{\upmu}{\mathord}{txfontsA}{"16}
\DeclareMathSymbol{\upnu}{\mathord}{txfontsA}{"17}
\DeclareMathSymbol{\upxi}{\mathord}{txfontsA}{"18}
\DeclareMathSymbol{\uppi}{\mathord}{txfontsA}{"19}
\DeclareMathSymbol{\uprho}{\mathord}{txfontsA}{"1A}
\DeclareMathSymbol{\upsigma}{\mathord}{txfontsA}{"1B}
\DeclareMathSymbol{\uptau}{\mathord}{txfontsA}{"1C}
\DeclareMathSymbol{\upupsilon}{\mathord}{txfontsA}{"1D}
\DeclareMathSymbol{\upphi}{\mathord}{txfontsA}{"1E}
\DeclareMathSymbol{\upchi}{\mathord}{txfontsA}{"1F}
\DeclareMathSymbol{\uppsi}{\mathord}{txfontsA}{"20}
\DeclareMathSymbol{\upomega}{\mathord}{txfontsA}{"21}
\DeclareMathSymbol{\upvarepsilon}{\mathord}{txfontsA}{"22}
\DeclareMathSymbol{\upvartheta}{\mathord}{txfontsA}{"23}
\DeclareMathSymbol{\upvarpi}{\mathord}{txfontsA}{"24}
\DeclareMathSymbol{\upvarrho}{\mathord}{txfontsA}{"25}
\DeclareMathSymbol{\upvarsigma}{\mathord}{txfontsA}{"26}
\DeclareMathSymbol{\upvarphi}{\mathord}{txfontsA}{"27}

\DeclareSymbolFont{ugmL}{OMX}{mdugm}{m}{n}
\SetSymbolFont{ugmL}{bold}{OMX}{mdugm}{b}{n}
\DeclareMathAccent{\wideparen}{\mathord}{ugmL}{"F3}

\def\dref#1{(\ref{#1})}

\def\pt{\partial}

\def\ra{\rightarrow}

\def\s{\subseteq}

\def\e{\varepsilon }
\def\mb{\mathbb}
\def\ol{\overline}

\def\vp{\varphi}

\def\fa{\forall}

\def\bf{\textbf}
\def\pt{\partial}

\def\om{\omega}
\def\Om{\Omega}

\def\al{\alpha}
\def\be{\beta}
\def\de{\delta}
\def\ga{\gamma}

\def\Ga{\Gamma}
\def\La{\Lambda}

\def\ts{\times}

\def\iy{\infty}

\def\f{\frac}

\def\se{\setminus}

\def\df{\mathrm d}

\def\wh{\widehat}

\def\mcH{\mathcal{H}}

\def\mcO{\mathcal{O}}

	\DeclareMathOperator{\Div}{div}

	\DeclareMathOperator{\dist}{dist}

	\DeclareMathOperator{\supp}{{supp}}

	\newcommand{\R}{\mathbb R}

\begin{document}

		\title{\bf  {\bf On Controllability of  a Class of $N$-dimensional Hyperbolic Equations
				with Internal Single-point Degeneracy} }

		\author{Donghui Yang$^{a}$,  Weijia Wu$^{b}$\footnote{\small
				The corresponding author. Email: weijiawu@yeah.net},
			\\ $^a${\it School of Mathematics and Statistics, Central South University}\\
			{\it Changsha 410075, P.R.China} \\
			$^b${\it School of Mathematics and Physics}\\
			{\it North China Electric Power University, Beijing 102206, China}\\
			 		}
		\date{}
		
		\maketitle

		\begin{abstract}
			   This paper explores the controllability of a class of $N$-dimensional hyperbolic equations featuring a single interior degenerate point. Firstly, we establish the well-posedness of the equation through the application of the Hardy inequality. Following this, we primarily utilize the Carleman estimate method to derive the observability inequality. By leveraging the equivalence between observability and controllability, we deduce the exact controllability of the equation. It is worth noting that our selected control region includes the degenerate null point. In the Carleman estimate, we adopt a unique approach to construct the weight function, effectively negating the influence of the degenerate region.
			\vspace{0.2cm}
			
			{\bf{Keywords}:}~ Degenerate hyperbolic equations, Carleman estimate, controllability, observability.  \vspace{0.2cm}
			
			{\bf {\bf AMS subject classifications:}} 35L25, 93B05, 93B07, 93C20.
			
		\end{abstract}
		\thispagestyle{empty}		
		\section{Introduction}
		
Controllability, a basic   concept in dynamical systems, holds utmost importance in understanding and influencing the trajectories of systems under specified control inputs. In recent times, the study of controllability within the framework of degenerate equations has sparked extensive and profound mathematical inquiries. This heightened scrutiny is attributed to the unique capabilities of these equations in depicting specific behaviors pertinent to natural and physical phenomena. For instance, the renowned Crocco equation in meteorology exemplifies a degenerate parabolic equation (\cite{martinez2003regional}), which finds extensive application in fields such as hypersonic aerodynamics and fluid dynamics. Similarly, the Black-Scholes equation (\cite{sakthivel2008exact}), ubiquitous in finance, presents a degenerate form, further emphasizing the versatility and applicability of degenerate equations across diverse disciplines.

Unlike classical  equations, degenerate equations violate the uniform ellipticity condition, thereby complicating the establishment of solution existence, uniqueness, and regularity. The degeneration of coefficients may result in significant singularities in certain regions, posing additional hurdles in proving controllability.

		  Compared to research on the controllability of degenerate parabolic equations (\cite{ banerjee2022carleman, cannarsa2013null, beauchard20152d,cannarsa2016global, flores2010carleman}), there has been comparatively less exploration into the controllability of degenerate hyperbolic equations, with current focus primarily on one-dimensional cases. In \cite{gueye2014exact}, the author employed nonharmonic Fourier series to investigate sharp observability estimates for a class of one-dimensional degenerate wave equations, with control applied to degenerate boundaries. In contrast, \cite{zhang2017persistent} used the multiplier method to study the persistent regional null controllability of a class of degenerate wave equations, where control acts internally.  In \cite{gueye2014exact}, control is observed to act on the degenerate boundary, whereas in \cite{zhang2017persistent}, control acts internally but still involves boundary degeneracy. Differing from both \cite{gueye2014exact} and \cite{zhang2017persistent}, the work presented in \cite{bai2023exact} explores the exact controllability of a one-dimensional degenerate wave equation, with control applied to the boundary but the degenerate point located within the interior of the domain. In \cite{bai2023exact}, both strong and weak degeneracy cases are considered, and exact controllability is proven under weak degeneracy. However, when control is applied only at one boundary point, exact controllability under strong degeneracy cannot be achieved. Unlike \cite{bai2023exact}, \cite{Allal2022} considers not only the degenerate case but also the singular case. Despite significant progress in studying one-dimensional degenerate hyperbolic equations (\cite{alabau2017control, bai2020exact, gueye2014exact, zhang2017persistent, zhang2018interior}), the application of Carleman estimates in this context remains relatively limited.

 The primary approach in Carleman estimates involves constructing a weight function to derive an estimate for the solution. In the context of degeneration, constructing this weight function becomes more complex. In this paper, we introduce a novel weight function that distinguishes itself from those utilized for equations satisfying the uniform ellipticity condition (as seen in \cite{Baudouin2007,FU2007}) and also from those employed in other studies focusing on degenerate Carleman estimates (such as \cite{alabau2006control,alabau2017control,Fragnelli2021}). This innovative approach enables us to effectively mitigate the influence of degenerate points and attain the desired weighted Carleman estimate.

  In this paper, we study the controllability problem for a class of high-dimensional internal single-point degenerate hyperbolic equations. Our primary strategy involves leveraging the Carleman estimate method, coupled with the introduction of suitable weight functions, to achieve
  exact  controllability for this specific class of problems. Our work stands out from existing research, such as \cite{bai2023exact,gueye2014exact,zhang2017persistent}, as it focuses on the high-dimensional case and presents distinctive features in both the control domain and the methodology employed. Furthermore, to our knowledge, there has been no prior research on the controllability of high-dimensional degenerate hyperbolic equations, making this paper a first attempt  in this topic.

		The remaining structure of this paper is outlined as follows. Section \ref{Se2} expounds on the principal findings of this study. In Section \ref{Se3}, several well-posedness conclusions are presented. Section \ref{Se4} is devoted to the construction of a corresponding Carleman estimate, upon which, in the subsequent Section \ref{Se5}, we substantiate unique continuation and approximate controllability. Furthermore, Section \ref{Se6} provides conclusions on observability inequalities and exact controllability, followed up by a concluding remarks in Section \ref{Se7}.

		\section{Problems and main results}\label{Se2}
		
		In this paper, we examine the following single-point interior degenerate system:
		\begin{equation}\label{EQ-1}
			\begin{cases}
				\pt_t^2 u-\Div\left(|x|^\alpha \nabla u\right)=\chi_\omega f, & \mbox{in } Q, \\
				u=0, & \mbox{on } \Sigma,\\
				u(0)=u_0, \pt_tu(0)=u_1, &\mbox{in }\Om,
			\end{cases}
		\end{equation}
		where  $\Om\s\R^N$  is a smooth domain with $N\geq 1$ being an integer and $0\in \Om$. Here, $Q=\Om\ts (0,T)$, with $T$ being a specific time that will be detailed later in Chapter \ref{Se5}. Additionally, $\Sigma=\pt Q=\pt\Om\ts (0,T)$, $\om\s\Om$ is a given control domain that will be specified later, $u_0\in \mcH_0^1(\Om)$ (with  $\mcH_0^1(\Om)$ defined subsequently), and
		$u_1\in L^2(\Om)$  represent the initial data. The control function $f$ belongs to $f\in L^2(Q)$. Let $\nu$ denote the outer normal vector to  $\pt\Om$, and define
		\begin{equation*}
			\Ga_+:=\left\{x\in\pt\Om\bigm| x\cdot \nu(x)\geq 0\right\},
		\end{equation*}
		where $\Ga_+$ is an open subset of $\Ga=\pt\Om$. Set
		\begin{equation*}
			\mcO(\Ga_+,\de)=\left\{x\in\Om\bigm| \dist(x,\Ga_+)<\de \right\},
		\end{equation*}
		and assume that
		\begin{equation}\label{2-6}
			\mcO(\Ga_+, 3\de)\s \om,
		\end{equation}
		where $\de>0$ is a given constant,  and
		\begin{equation*}
			\dist(x, \Ga_+)=\inf_{y\in \Ga_+}|x-y|.
		\end{equation*}
		Define
		\begin{equation*}
			\mcH^1(\Om)=\left\{u\in L^2(\Om)\left| \ \! |x|^\al \nabla u\cdot \nabla u\in L^2(\Om)\right.\right\},
		\end{equation*}
		with the norm
		\begin{equation*}
			\|u\|_{\mcH^1(\Om)}=\left\| |x|^{\frac{\alpha}{2}} \nabla u \right\|_{L^2(\Omega)}+\|u\|_{L^2(\Omega)}.
		\end{equation*}
		and
		\begin{equation*}
			\mcH_0^1(\Om)=\overline{C_0^\infty(\Omega)}^{\left\|\cdot \right\|_{\mcH^1(\Om)} }.
		\end{equation*}
		It is evident that there exists a solution
		\begin{equation*}
			u\in C([0,T]; \mcH_0^1(\Om))\cap C^1([0,T]; L^2(\Om))
		\end{equation*}
		to the  equation \eqref{EQ-1} for any  $(u_0,u_1)\in \mcH_0^1(\Om)\times  L^2(\Om)$.  This will be proven using the classic Galerkin method in Section \ref{Se3}.
		
		As is classical in controllability problems (HUM), we introduce the nonhomogeneous adjoint problem for \eqref{EQ-1}:
		\begin{equation}\label{EQ-2}
			\begin{cases}
				\pt_t^2z-\Div(|x|^\al \nabla z)=F, &\mbox{in } Q,\\
				z=0, &\mbox{on }\Sigma,\\
				z(0)=z_0, \pt_tz(0)=z_1, &\mbox{in }\Om,
			\end{cases}
		\end{equation}
		where $F\in L^2(\Om)$, $z_0\in \mcH_0^1(\Om)$,  and $z_1\in L^2(\Om)$.
		
		The main results of this paper are the Carleman inequality and observability inequality stated in Theorem \ref{Carleman2}
		following.			
		\begin{theorem}\label{Carleman2}
			Let $\om\subset\Om$ be given as in \eqref{2-6}. Assume  $0\in\om$. For any solution $z$ to \eqref{EQ-2}, there exist positive constants  $C=C(\om,\Om)$, such that for sufficiently large constants
			$s$ and $\gamma$, the following inequality holds:
			\begin{equation}\label{2.3}
				\begin{split}
					&\iint_Qe^{2s\vp}s\ga \vp \left(|x|^\al \nabla z\cdot \nabla z+|z_t|^2+s^2\ga^2\vp^2|z|^2\right)\df x\df t\\
					&\leq C\iint_{\om\ts (0,T)}e^{2s\vp}s\ga \vp\left( z_t^2+|x|^\al \nabla z\cdot \nabla z +s^2\ga^2\vp^2|z|^2 \right)\df x\df t+C\iint_Qe^{2s\vp} F^2\df x\df t,
				\end{split}
			\end{equation}
	where     $\varphi$  is the weight function defined in \eqref{vp}.
		\end{theorem}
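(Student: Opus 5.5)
We plan to follow the standard pointwise-identity route for hyperbolic Carleman estimates (as in \cite{Baudouin2007,FU2007}), adapted to the degenerate operator $\Div(|x|^\al\nabla\cdot)$. The weight will be $\vp=e^{\ga\psi}$ with $\psi(x,t)=|x-x_0|^2-\be t^2+c_0$ (or its time-centered version on $(-T,T)$). The point $x_0$ is chosen outside $\ol\Om$, so that $\nabla\psi\neq0$ everywhere and the boundary term only has the right sign off $\Ga_+$. Since the statement refers to \eqref{vp}, we will take that weight, but we expect it to be of this type, modified near the origin through a cutoff. Concretely, $\psi$ is built so that the positivity conditions only need to hold outside a ball $B(0,r)\s\om$; inside this ball every term is absorbed into the right-hand side of \eqref{2.3}, because $0\in\om$.

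\textbf{Conjugation and pointwise identity.} We set $\theta=e^{s\vp}$, $v=\theta z$ and $P=\pt_t^2-\Div(|x|^\al\nabla\cdot)$. We then expand $\theta Pz=I_1+I_2$, where
\[
I_1=v_{tt}-\Div(|x|^\al\nabla v)+s^2\ga^2\vp^2\bigl(\psi_t^2-|x|^\al\nabla\psi\cdot\nabla\psi\bigr)v
\]
collects the even part, and $I_2$ collects the odd part $-2s\ga\vp(\psi_tv_t-|x|^\al\nabla\psi\cdot\nabla v)$ together with suitable zero-order terms. Multiplying by $2I_2$ and integrating by parts in $x$ and $t$ yields a pointwise identity of the form
\[
|\theta F|^2\ge 2I_1I_2=\text{(interior quadratic form in } v_t,\nabla v,v)+\text{divergence terms}.
\]
The principal gradient term is $s\ga\vp\bigl[\,2|x|^{\al}D^2\psi+\ga\text{-terms}-\nabla(|x|^\al)\cdot\nabla\psi\text{-type terms}\bigr]$. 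Away from $0$, $|x|^\al$ is smooth and bounded above and below, so the classical computation applies: pseudoconvexity, $\be<1$ (scaled by $\min|x|^\al$), and $\ga$ large make the form positive, $\gtrsim s\ga\vp\bigl(v_t^2+|x|^\al|\nabla v|^2+s^2\ga^2\vp^2v^2\bigr)$. The zero-order term requires $s^3\ga^3\vp^3(|x|^\al|\nabla\psi|^2-\psi_t^2)^{\ldots}$ to have a sign, which is again handled by $\ga$ large where $\nabla\psi\neq0$.

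\textbf{Localisation at the degeneracy.} Take $\eta\in C_0^\infty(\om)$ with $\eta\equiv1$ on $B(0,r)\cup\mcO(\Ga_+,2\de)$. Apply the identity to $(1-\eta)z$, which vanishes near $0$ and near $\Ga_+$. It solves $P((1-\eta)z)=(1-\eta)F+[\,\text{commutator}\,]$, where the commutator is supported in $\om$ and involves only $|x|^\al\nabla\eta\cdot\nabla z$ and $z$. On $\Ga\setminus\Ga_+$ we have $\pt_\nu\psi<0$ and $z=0$, so the boundary term $|x|^\al(\nabla\psi\cdot\nu)|\pt_\nu v|^2$ has a favourable sign. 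The time-boundary terms are eliminated either by the choice of $\psi$, which makes $\vp$ small at $t=0,T$ relative to the interior, or by a time cutoff whose commutator is controlled by energy estimates for \eqref{EQ-2}. Writing $z=\eta z+(1-\eta)z$, the $\eta z$ part is bounded pointwise by the $\om$-integral on the right of \eqref{2.3}. Reverting from $v$ to $z$, using $|\nabla z|^2\lesssim\theta^{-2}(|\nabla v|^2+s^2\ga^2\vp^2v^2)$, then gives \eqref{2.3}. Lower-order terms are absorbed for large $s$.

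\textbf{Main obstacle.} The difficult step is controlling the terms that involve derivatives of $|x|^\al$, namely $\nabla(|x|^\al)=\al|x|^{\al-2}x$ and its second derivatives. These are singular or unbounded at $0$ when $\al<1$, and their sign is indefinite. The cutoff keeps them on $\{|x|\ge r\}$, where they are bounded. There they are dominated by $2s\ga^2\vp|x|^\al(\nabla\psi\cdot\nabla v)^2$-type terms and by the $\ga$-largeness, provided $\psi$ is chosen with $\inf_{|x|\ge r}|\nabla\psi|>0$. We would first try to verify this positivity directly with $x_0\notin\ol\Om$. If the weight \eqref{vp} instead has a critical point inside $B(0,r)$, that is harmless for the same reason: that region lies in $\om$.
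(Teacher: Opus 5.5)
Your skeleton matches the paper's. You conjugate by $e^{s\vp}$ with $\vp=e^{\ga\psi}$, split into even and odd parts (with the extra zero-order multiplier that the paper realises through $(P^+w,s\ga\vp w)$), prove positivity on $\{|x|\ge r\}$, and glue with a cutoff around $0\in\om$. You differ at $\Ga_+$. Cutting off there too (with $\eta\in C^\infty(\ol\Om)$, not $C_0^\infty(\om)$, since $\eta\equiv1$ up to $\Ga_+$) leaves a first-order commutator supported in $\om$, which the observation term dominates for large $s$. That is a clean substitute for the paper's second multiplier $g\cdot\nabla w$, $g=\rho V$, which leads to \eqref{10.03.7}.

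The centre of the weight, however, is not free, and your primary choice $x_0\notin\ol\Om$ fails for two reasons.

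First, the boundary term is $-2s\ga\iint_\Sigma\vp|x|^{2\al}\bigl((x-x_0)\cdot\nu\bigr)|\pt_\nu w|^2\df S\df t$. Its unfavourable part $\{(x-x_0)\cdot\nu>0\}$ need not lie in $\mcO(\Ga_+,3\de)$, because in \eqref{2-6} $\Ga_+=\{x\cdot\nu\ge0\}$ is taken relative to the origin. So ``the right sign off $\Ga_+$'' holds only for $x_0=0$.

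Second, for $x_0=0$ one has $\nabla(|x|^\al)\parallel\nabla\psi$, and the first-order part of $(P^+w,P^-w)$ is exactly
\[
s\ga\vp\bigl[(4-2\al)|x|^{2\al}|\nabla w|^2+4\al|x|^{2\al-2}(x\cdot\nabla w)^2\bigr]+2s\ga^2\vp\bigl(\psi_tw_t-2|x|^\al x\cdot\nabla w\bigr)^2-4s\be\ga\vp w_t^2 .
\]
Nothing singular survives at this order, and positivity comes from $\al<2$ (the paper's factor $2-\al$), not from $\ga$ large. The $\ga^2$ term is a square in $w_t$ and the radial derivative only, so it cannot absorb an isotropic negative term. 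For $x_0\neq0$ that isotropic term is $-2\al s\ga\vp|x|^{2\al-2}\,x\cdot(x-x_0)\,|\nabla w|^2$, which near $|x|=r$ can exceed $4s\ga\vp|x|^{2\al}|\nabla w|^2$. The mechanism in your last paragraph therefore does not close. Take \eqref{vp} exactly, $\psi=|x|^2-\be(t-t_0)^2+\be_0$: its only critical point is the degenerate point, so one ball $B(0,3\e)\s\om$ removes both.

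The second gap is the traces at $t=0,T$, which \eqref{2.3} does not contain. Neither of your mechanisms removes them.
\begin{itemize}
\item Smallness of $\vp$ at $t=0,T$ makes these terms small but does not delete them. That smallness is only exploited later, as $e^{2sA_1}$ versus $e^{2sA_0}$ in Theorem \ref{Carleman3}.
\item A time cutoff $\xi$ produces $2\xi'z_t+\xi''z$ on $\Om\ts\supp\xi'$, outside $\om$. Absorbing it via the energy estimate works for $F=0$. Otherwise it leaves $e^{2sm}\|F\|_{L^2(Q)}^2$ with $m=\max_{\ol\Om\ts\supp\xi'}\vp$, which $C\iint_Qe^{2s\vp}F^2$ does not control since $\min_{\ol Q}\vp<m$.
\end{itemize}
The paper does not resolve this either. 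It simply uses $w(0)=w(T)=w_t(0)=w_t(T)=0$ in $I_1,I_2,I_5,L_1$, and applies the estimate only to $\xi(t)\zeta(x)z$.

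So either state the estimate for functions with vanishing time traces, or show that the time-boundary flux is nonnegative. The latter holds when $\{t=0\}$ and $\{t=T\}$ are time-like for the weight, e.g.\ $\be\min(t_0,T-t_0)>\max_{\ol\Om}|x|^{1+\al/2}$. Then the principal flux at $t=T$ satisfies
\[
s\int_\Om\Bigl(|\vp_t|\bigl(w_t^2+|x|^\al|\nabla w|^2\bigr)+2|x|^\al(\nabla\vp\cdot\nabla w)w_t\Bigr)\df x\ \ge\ s\int_\Om\bigl(|\vp_t|-|x|^{\al/2}|\nabla\vp|\bigr)\bigl(w_t^2+|x|^\al|\nabla w|^2\bigr)\df x\ \ge 0 .
\]
Its analogue at $t=0$ and the flux $s^3\int_\Om|\vp_t|\bigl(\vp_t^2-|x|^\al|\nabla\vp|^2\bigr)w^2\df x$ are also nonnegative. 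The remaining flux terms are lower order and are absorbed for $s$ large.
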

		
		Using the duality between controllability and observability   (\cite{rockafellar1967duality,lions1992remarks}), proving controllability is equivalent to establishing an observability property for the adjoint system. From this, we can deduce the unique continuation and exact controllability of \eqref{EQ-1}.

		\begin{theorem}\label{Carleman5}
			Let  $\om$  be given as in \eqref{2-6} and $0\in \om$. If $z$ is a solution of \eqref{EQ-2} with $F=0$ (or, equivalently, if $u$ is a solution of \eqref{EQ-1} with $f=0$), and $z=0$ on $\omega \times \left( 0,T\right) $, then $z=0$ in $Q$.
		\end{theorem}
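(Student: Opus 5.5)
The plan is to derive Theorem \ref{Carleman5} directly from the Carleman estimate of Theorem \ref{Carleman2}, taking $F=0$. The hypothesis $z=0$ on $\om\ts(0,T)$ makes $z$, $z_t$ and $\nabla z$ vanish a.e. on $\om\ts(0,T)$. The point is that $z\in C([0,T];\mcH_0^1(\Om))\cap C^1([0,T];L^2(\Om))$ vanishes on the open cylinder $\om\ts(0,T)$. Hence its distributional derivatives $z_t$ and $\nabla z$ vanish there as well, and so does $|x|^\al\nabla z\cdot\nabla z$, which lies in $L^1$ by the definition of $\mcH^1(\Om)$. Consequently the whole right-hand side of \eqref{2.3} is zero. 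I would note explicitly that this includes the neighbourhood of the degenerate point $0\in\om$; this is exactly why the theorem assumes $0\in\om$, since it is the setting in which \eqref{2.3} is available.

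Fix $s,\ga$ large enough for \eqref{2.3} to hold. Then
\[
\iint_Q e^{2s\vp}s\ga\vp\left(|x|^\al\nabla z\cdot\nabla z+|z_t|^2+s^2\ga^2\vp^2|z|^2\right)\df x\df t\le 0 .
\]
The weight $\vp$ from \eqref{vp} should be strictly positive on $Q$, and $e^{2s\vp}>0$ on $Q$. So every term in the integrand must vanish a.e. In particular $s^3\ga^3\vp^3e^{2s\vp}|z|^2=0$ a.e. in $Q$, which gives $z=0$ a.e. in $Q$. By continuity in time with values in $L^2(\Om)$, we get $z(t)=0$ for every $t\in[0,T]$. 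The parenthetical claim about \eqref{EQ-1} with $f=0$ is immediate, because that system is the same equation.

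The only step that needs care is checking that the weight $\vp$ in \eqref{vp} really is positive, or at least bounded below by a positive constant, on all of $Q$. If the paper instead uses a weight that degenerates near $t=0$ and $t=T$, as in the Baudouin--Puel or Fu--Yong--Zhang constructions, the argument above only gives $z=0$ on $\Om\ts(t_1,t_2)$ for some subinterval. In that case I would finish with energy conservation for \eqref{EQ-2} with $F=0$. The energy
\[
E(t)=\frac12\int_\Om\left(|z_t|^2+|x|^\al\nabla z\cdot\nabla z\right)\df x
\]
is constant in $t$, as established in the well-posedness part, Section \ref{Se3}. If $z$ vanishes on a time subinterval, then $E\equiv0$ there, hence $E\equiv0$ on $[0,T]$. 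Then $z_t=0$ on $Q$, so $z(t)=z(t_1)=0$ for all $t$.

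A minor technical point is that \eqref{2.3} is stated for solutions of \eqref{EQ-2}. These are energy solutions, not necessarily smooth, so the Carleman estimate is applied either by density of smooth data or directly, as Theorem \ref{Carleman2} permits. The vanishing of the right-hand side is preserved in either case, since it is a statement about $z$ on $\om\ts(0,T)$.
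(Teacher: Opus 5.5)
Your argument fails at the first step: you apply \eqref{2.3} directly to the uncut homogeneous solution $z$ on all of $Q$. The literal wording ``for any solution $z$'' in Theorem~\ref{Carleman2} is not what its proof establishes.

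Every integration by parts in time in that proof (in $I_1$, $I_2$, $I_5$, $L_1$, and in the identity used for $H_8$) discards terms such as $s\int_\Om \vp_t w_t^2\,\df x\big|_0^T$, $2s\int_\Om |x|^\al(\nabla\vp\cdot\nabla w)w_t\,\df x\big|_0^T$, $s\int_\Om|x|^\al\vp_t|\nabla w|^2\,\df x\big|_0^T$ and $s^3\int_\Om\vp_t^3w^2\,\df x\big|_0^T$. It does so by invoking $w(0)=w(T)=w_t(0)=w_t(T)=0$.

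The weight $\vp=e^{\ga\psi}$ is positive and bounded on $\ol Q$. So the worry in your fallback paragraph is misplaced: the weight does not degenerate at $t=0,T$, and that is exactly the problem. Consequently $w=e^{s\vp}z$ carries the same nonzero Cauchy data as $z$, and those end-time terms are simply missing from the right-hand side of \eqref{2.3}. Moreover, a solution of \eqref{EQ-2} with $F=0$ and $z(0)=z_t(0)=0$ vanishes identically by Theorem~\ref{T2.2}. The estimate in the form actually proved therefore says nothing about the nontrivial solutions you want to exclude.

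A consistency check confirms that something is missing. Your argument uses no largeness of $T$, so it would give unique continuation for every $T>0$. Away from $x=0$, however, the operator is a wave operator with speed at most $\max_{\ol\Om}|x|^{\al/2}$. Whenever $\Om\se\ol\om\neq\emptyset$, data supported in a small ball in $\Om\se\ol\om$ produce, for small $T$, a nonzero solution vanishing on $\om\ts(0,T)$.

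The missing idea is a time cut-off combined with an energy-absorption argument, which is the paper's route.

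\begin{itemize}
\item The paper applies the Carleman estimate to $\xi(t)\zeta(x)z$, with $\xi$ vanishing near $t=0,T$. This function does satisfy the end conditions, at the price of two extra sources. One is $2\xi_t\zeta z_t+\xi_{tt}\zeta z$, supported in $t\in(0,2\eta)\cup(T-2\eta,T)$. The other is $2\xi|x|^\al\nabla\zeta\cdot\nabla z+\xi\Div(|x|^\al\nabla\zeta)z$, supported in $B(0,3\e)\s\om$.
\item This gives \eqref{10.03.4}, and \eqref{10.03.7} converts the $\Sigma_+$ term into an integral over $\om\ts(0,T)$.
\item The condition $\be T^2>4\max_{\Om}|x|^2+4\de$ is where $T\ge T_0$ enters. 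It yields \eqref{10.03.2}--\eqref{10.03.3}: the cut-off errors carry the weight $e^{2A_1s}$, while the left side dominates $e^{2A_0s}\int_{T/2-\eta}^{T/2+\eta}E(t)\,\df t$. Here $A_0=e^{-\ga\wh\de}>A_1=e^{-2\ga\wh\de}$.
\item Remark~\ref{R1} compares $E(t)$ with $E(0)$. Taking $s$ large absorbs $Ce^{-2(A_0-A_1)s}E(0)$, leading to $E(0)\le C\bigl(\iint_{\om\ts(0,T)}(|x|^\al\nabla z\cdot\nabla z+z_t^2+z^2)\,\df x\df t+\|F\|^2_{L^2(Q)}\bigr)$.
\end{itemize}

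Only after this observability-type inequality does your final step work: with $F=0$ and $z=0$ on $\om\ts(0,T)$, one gets $E(0)=0$ and hence $z\equiv 0$.
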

		After establishing unique continuation, we can further deduce controllability.
		\begin{theorem}\label{Carleman4}
			Let $\om\s\Om$ be given as in \eqref{2-6}.
			Assume $0\in \om$. Then the system \eqref{EQ-1} is exactly controllable, i.e., for every $(u_0,u_1), (w_0,w_1) \in \mcH_0^1(\Om) \times L^2(\Omega)$,  there exists a control $f\in L^2(Q)$,  such that the solution of \eqref{EQ-1} satisfies $u(T)=w_0$ and $ u_t (T)=w_1$.				
		\end{theorem}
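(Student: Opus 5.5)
We prove Theorem \ref{Carleman4} by the Hilbert Uniqueness Method. The approach is to reduce exact controllability to an observability inequality for the adjoint system \eqref{EQ-2} with $F=0$, and to obtain that inequality from the Carleman estimate of Theorem \ref{Carleman2}.

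\textbf{Reduction to null controllability.} Since the equation is linear, time-reversible and well posed in $\mcH_0^1(\Om)\times L^2(\Om)$ (Section \ref{Se3}), it suffices to show that every initial state can be driven to $(0,0)$ at time $T$. Indeed, let $\tilde u$ solve the uncontrolled equation backward from $(w_0,w_1)$ at time $T$. Steering $(u_0,u_1)-(\tilde u(0),\tilde u_t(0))$ to zero then gives the general statement. By the standard duality (\cite{rockafellar1967duality,lions1992remarks}), null controllability with $L^2(\om\ts(0,T))$ controls is equivalent to the observability inequality
\begin{equation*}
\|z_0\|_{\mcH_0^1(\Om)}^2+\|z_1\|_{L^2(\Om)}^2\le C\iint_{\om\ts(0,T)}|z|^2\,\df x\df t
\end{equation*}
for the adjoint problem, or to a variant with $z_t$ in the observation. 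This variant works after taking $(z_0,z_1)\in L^2\times\mcH^{-1}$, by the usual shift in regularity. We would then build the control $f=\chi_\om z$ by minimizing the HUM functional
\begin{equation*}
J(z_0,z_1)=\tfrac12\iint_{\om\ts(0,T)}|z|^2+\langle\text{initial data},(z_0,z_1)\rangle,
\end{equation*}
whose coercivity is exactly the observability inequality.

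\textbf{Observability from the Carleman estimate.} We would proceed in three steps.
\begin{itemize}
\item[(i)] Energy conservation. For $F=0$, the energy $E(t)=\frac12\int_\Om(|z_t|^2+|x|^\al\nabla z\cdot\nabla z)\,\df x$ is constant in $t$. This follows by multiplying by $z_t$ and using the Hardy inequality to justify the computation in $\mcH_0^1$.
\item[(ii)] Time cut-off. On a subinterval $(T/2-\e_0 T,\,T/2+\e_0 T)$ the weight $\vp$ from \eqref{vp} is bounded below; for the usual hyperbolic weight $\vp$ is largest at $t=T/2$ and small near $t=0,T$, given $T$ large. The left side of \eqref{2.3} therefore dominates $c\,e^{2s\min\vp}\int E(t)\,\df t\ge c'\,E(0)$ there. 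The Carleman estimate as stated has no boundary terms at $t=0,T$, so we assume the weight makes the data at $t=0,T$ negligible. Otherwise we would apply \eqref{2.3} to $\eta(t)z$ with a cut-off $\eta$ that equals $1$ on the middle interval. The resulting source $F=\eta''z+2\eta'z_t$ lives where $\vp$ is small, so its contribution is at most $C e^{2s\max_{\text{ends}}\vp}E(0)$. For $T$ large and $s$ fixed large, this is absorbed by the left side.
\item[(iii)] Removing the gradient and $z_t$ terms. This step gives $E(0)\le C\iint_{\om\ts(0,T)}(z_t^2+|x|^\al|\nabla z|^2+|z|^2)$. To pass to an observation involving only $z$, we would shrink $\om$ slightly, which the $3\de$ margin in \eqref{2-6} allows. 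We then apply a local energy estimate: multiply the equation by $\theta^2 z$ with $\theta$ a cut-off in $x$ (and $\eta$ in $t$). This bounds $\iint\theta^2|x|^\al|\nabla z|^2$ by $\iint|z|^2$ and $\iint\theta^2 z_t^2$ on the larger set.
\end{itemize}

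\textbf{Main obstacle.} The hardest step is (iii) near the degenerate point $0\in\om$. There $|x|^\al$ vanishes, so local ellipticity fails and the standard interior estimates lose control of $z_t$ versus $\nabla z$. We would first handle this by keeping the observation in the full energy form, $z_t$ plus the weighted gradient, which Theorem \ref{Carleman2} directly provides. HUM then goes through equally well with that observation operator: the control becomes $f=\chi_\om(\cdot)$ applied to the corresponding adjoint quantity. If an $L^2$-only observation is required, we would use the Hardy inequality to control $|z|^2/|x|^{2-\al}$ near $0$ and to close the cut-off argument.

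Finally, uniqueness (Theorem \ref{Carleman5}) gives injectivity of the observation map. Together with the observability bound, this yields an isomorphism $\Lambda$ in HUM, so the minimizer of $J$ exists and produces the desired $f\in L^2(Q)$.
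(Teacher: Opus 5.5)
Your route matches the paper's. The paper also argues by HUM, via the map $J(z_0,z_1)=(u_t(\cdot,0),-u(\cdot,0))$ with $f=\chi_\om z$, the identity $\langle J(z_0,z_1),(z_0,z_1)\rangle=\iint_Q\chi_\om z^2\,\df x\df t$, and Lax--Milgram. There is, however, a genuine gap: the observability inequality you display, $\|z_0\|^2_{\mcH_0^1(\Om)}+\|z_1\|^2_{L^2(\Om)}\le C\iint_{\om\ts(0,T)}|z|^2\,\df x\df t$, is false. By Theorem \ref{compactembed}, $A=-\Div(|x|^\al\nabla\cdot)$ has $L^2$-normalized eigenfunctions $e_k$ with eigenvalues $\lambda_k\to\infty$. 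Then $z_k=\cos(\sqrt{\lambda_k}\,t)e_k$ solves \eqref{EQ-2} with $F=0$, its left-hand side is at least $\lambda_k$, and its right-hand side is at most $CT$. The paper's proof asserts exactly this coercivity on $\mcH_0^1(\Om)\ts L^2(\Om)$, citing Theorem \ref{T2.2}, so it fails on the same counterexample and cannot be leaned on here.

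You have the regularity pairings reversed. For states in $\mcH_0^1(\Om)\ts L^2(\Om)$ and controls $\chi_\om f$ with $f\in L^2(Q)$, the dual inequality is $\|z_0\|^2_{L^2(\Om)}+\|z_1\|^2_{\mcH^{-1}(\Om)}\le C\iint_{\om\ts(0,T)}|z|^2$ for data in $L^2\ts\mcH^{-1}$. Equivalently, via $z\mapsto z_t$, it is $\|z_0\|^2_{\mcH_0^1(\Om)}+\|z_1\|^2_{L^2(\Om)}\le C\iint_{\om\ts(0,T)}|z_t|^2$ for energy data. So the stated aim of step (iii), an observation of $z$ alone at the energy level, is unattainable. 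Your fallback of keeping $|x|^{\al/2}\nabla z$ in the observation does not prove the theorem either. By duality it yields a divergence-form control $\Div(\chi_\om|x|^{\al/2}g)$, not $\chi_\om f$ with $f\in L^2(Q)$.

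The repair fits your outline. Step (ii), with the Carleman estimate applied on a slightly smaller $\om_0$ and time interval plus the cut-off, gives $E(0)\le C\iint_{\om_0\ts(0,T)}(z_t^2+|x|^\al\nabla z\cdot\nabla z+z^2)$. Your multiplier $\eta\theta^2 z$ then removes the gradient. The point $0$ is not an obstacle: the coefficient enters only through the cross term $2\eta\theta z|x|^\al\nabla\theta\cdot\nabla z$, which needs only $|x|^\al\le C$, so no Hardy inequality or ellipticity is involved. What this leaves is $E(0)\le C\iint_{\om\ts(0,T)}(z_t^2+z^2)$, not an observation of $z$ alone.

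The missing step is a compactness--uniqueness argument that removes $\iint_{\om\ts(0,T)}z^2$. By Theorem \ref{compactembed} this term is compact relative to the energy. Hence the space of data with $z_t=0$ on $\om\ts(0,T)$ is finite dimensional and $A$-invariant, so if nontrivial it contains an eigenfunction, which Theorem \ref{Carleman5} forces to vanish. That is where uniqueness is actually needed. Using it only for injectivity of the HUM map, as in your last paragraph, is redundant once a coercive bound holds. With $E(0)\le C\iint_{\om\ts(0,T)}z_t^2$, HUM produces $f=\chi_\om\tilde z$, where $\tilde z$ is an adjoint solution with data in $L^2\ts\mcH^{-1}$. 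This $f$ lies in $L^2(Q)$, as the theorem requires.
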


		\section{Well-posedness}\label{Se3}
		
		In this section, we discuss  the well-posedness of \eqref{EQ-1}. To set the stage, we present some preliminary results.
		
 Consider  $\varepsilon _0\in (0, d(0,\Gamma))$,  where
 $d(0,\Gamma)=\inf_{y\in \Gamma} |y|$,  and for a small constant $\varepsilon \in \left( 0,\varepsilon _0\right] $, let
		\begin{equation*}
			\Om^\varepsilon  = \left\lbrace x\in \Om \mid |x|>\varepsilon  \right\rbrace, \ \Om_\varepsilon  = \left\lbrace x\in \Om \mid |x|<\varepsilon  \right\rbrace \mbox{ and } S(\varepsilon ) = \left\lbrace x\in \mathbb{R}^N \bigm| |x|=\varepsilon  \right\rbrace\,.
		\end{equation*}
		\begin{lemma}\label{L2.2}
			For any $N\ge 2$ and $\alpha\in (0,2)$, it holds that  $|x|^{\frac{\alpha}{2}-1}  u  \in L^2(\Om)$ for all $ u \in \mcH_0^1(\Om)$, satisfying the inequality
			\begin{equation}\label{2.1}
				(N-2+\alpha) \left\| |x|^{\frac{\alpha}{2}-1}   u  \right\| _{L^2(\Om)} \le C\left\|  u  \right\| _{\mcH_0^1(\Om)},
			\end{equation}
			where $C>0$ is a constant independent of   $ u$.  Additionally, if $ u \in \mathcal{H}_0^1(\Om)$,
			then $ u \in L^2(\Om)$ .
		\end{lemma}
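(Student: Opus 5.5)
We prove \eqref{2.1} first for $u\in C_0^\infty(\Om)$ and then pass to $\mcH_0^1(\Om)$ by density, since $\mcH_0^1(\Om)$ is the closure of $C_0^\infty(\Om)$ in $\|\cdot\|_{\mcH^1(\Om)}$. For smooth compactly supported $u$ the natural tool is a weighted Hardy inequality obtained by integrating by parts against the radial vector field $x|x|^{\al-2}$. Away from the origin its divergence is
\begin{equation*}
\Div\left(x|x|^{\al-2}\right)=(N+\al-2)|x|^{\al-2},
\end{equation*}
and $|x|^{\al-2}$ is locally integrable because $N+\al-2>0$ for $N\ge 2$ and $\al>0$.

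\textbf{The integration by parts.} To handle the singularity we work on $\Om^\e$ and let $\e\to0$. On $\Om^\e$ we have
\begin{equation*}
(N+\al-2)\int_{\Om^\e}|x|^{\al-2}u^2\,\df x=-\int_{\Om^\e}2u\,|x|^{\al-2}x\cdot\nabla u\,\df x+\int_{S(\e)}u^2|x|^{\al-2}\,x\cdot\nu\,\df\sigma .
\end{equation*}
There is no contribution from $\pt\Om$ since $u$ has compact support. On $S(\e)$ the outward normal of $\Om^\e$ is $-x/|x|$, so the sphere term equals $-\e^{\al-1}\int_{S(\e)}u^2\,\df\sigma$. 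It is therefore nonpositive, and in any case bounded by $C\|u\|_\infty^2\e^{\al+N-2}\to0$. Either observation lets us drop it or let it vanish.

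\textbf{Closing the smooth estimate.} Letting $\e\to0$ (monotone convergence on the left) and applying Cauchy--Schwarz with $|x|^{\al-2}|x|=|x|^{\frac\al2-1}|x|^{\frac\al2}$ gives
\begin{equation*}
(N+\al-2)\left\||x|^{\frac\al2-1}u\right\|_{L^2}^2\le 2\left\||x|^{\frac\al2-1}u\right\|_{L^2}\left\||x|^{\frac\al2}\nabla u\right\|_{L^2}.
\end{equation*}
Dividing through yields
\begin{equation*}
(N+\al-2)\left\||x|^{\frac\al2-1}u\right\|_{L^2}\le 2\left\||x|^{\frac\al2}\nabla u\right\|_{L^2}\le 2\|u\|_{\mcH_0^1(\Om)},
\end{equation*}
which is \eqref{2.1} with $C=2$.

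\textbf{Density step.} This is the main obstacle, though it is mild. Take $u_n\in C_0^\infty(\Om)$ with $u_n\to u$ in $\mcH^1(\Om)$. Then $u_n\to u$ in $L^2(\Om)$, so along a subsequence $u_n\to u$ almost everywhere. The smooth estimate applied to $u_n-u_m$ shows that $|x|^{\frac\al2-1}u_n$ is Cauchy in $L^2(\Om)$, and its limit must equal $|x|^{\frac\al2-1}u$ by the a.e.\ convergence. Hence $|x|^{\frac\al2-1}u\in L^2(\Om)$ and the inequality passes to the limit; Fatou's lemma would give the same conclusion.

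\textbf{The final assertion.} The claim $u\in L^2(\Om)$ holds trivially from the definition of $\mcH^1(\Om)$. It also follows from the estimate itself: $\Om$ is bounded and $\frac\al2-1<0$, so $|x|^{1-\frac\al2}\le C_\Om$ on $\Om$, which gives $\|u\|_{L^2}\le C_\Om\left\||x|^{\frac\al2-1}u\right\|_{L^2}$, controlled by $\|u\|_{\mcH_0^1}$ when $N\ge2$.
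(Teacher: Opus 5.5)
Your proof is correct and follows essentially the same route as the paper: integrate by parts on $\Omega^\varepsilon$ against the field $|x|^{\alpha-2}x$, discard the nonpositive sphere term $-\varepsilon^{\alpha-1}\int_{S(\varepsilon)}u^2$, apply Cauchy--Schwarz, and let $\varepsilon\to0^+$, which gives the constant $C=2$. The only difference is bookkeeping: the paper applies the identity directly to $u\in\mathcal{H}_0^1(\Omega)$, using that its restriction lies in $W^{1,2}(\Omega^\varepsilon)$ with zero trace on $\partial\Omega$, whereas you prove it for $C_0^\infty(\Omega)$ functions and close by density, which avoids trace theory at the cost of the limiting step that you handle correctly.
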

		\begin{proof} 		
			For $ u  \in \mcH_0^1(\Om)$,  its restriction belongs to $W^{1,2}\left(\Om^{\varepsilon }\right)$,
			and its trace represents a bounded linear map into  $L^2\left(\partial \Om^{\varepsilon }\right)$. Then,
since the trace of $ u $ is zero on $\partial \Omega$, we have
			$$
			\begin{aligned}
				2 \int_{\Om^{\varepsilon }} |x|^{\alpha-2}   u (x \cdot \nabla  u ) d x & =\int_{\Om^{\varepsilon }} |x|^{\alpha-2}  x \cdot \nabla\left( u ^2\right) d x \\
				& =-\int_{S(\varepsilon )} |x|^{\alpha-1}   u ^2 d s-\int_{\Om^{\varepsilon }}(N-2+\alpha) |x|^{\alpha-2}   u ^2 d x,
			\end{aligned}
			$$
			 and
			$$
			\begin{aligned}
				(N-2+\alpha) \int_{\Om^{\varepsilon }} |x|^{\alpha-2}   u ^2 d x & \le-2 \int_{\Om^{\varepsilon }} |x|^{\alpha-2}   u (x \cdot \nabla  u ) d x \le 2 \int_{\Om^{\varepsilon }}\left(|x|^{\frac{\alpha}{2}-1}  | u |\right)\left(|x|^{\frac{\alpha}{2}}|\nabla  u |\right) d x \\
				& \le 2\left\{\int_{\Om^{\varepsilon }} |x|^{\alpha-2}  u ^2 d x\right\}^{1 / 2}\left\{\int_{\Om^{\varepsilon }} |x|^\alpha   \nabla  u  \cdot \nabla  u   d x\right\}^{1 / 2}.
			\end{aligned}
			$$
			Finally, \eqref{2.1} follows by letting $\varepsilon  \rightarrow 0^+$ due to  $|x|^\alpha \nabla  u \cdot\nabla  u  \in L^1(\Omega)$.
		\end{proof}		
		
		The inequality \dref{2.1} plays a pivotal role and will be essential in the subsequent proof of compact embedding. Now, let us establish that the solution space is indeed a Hilbert space.

		\begin{lemma}\label{HS}	
			The space $(\mcH_0^1(\Omega),\left\langle \cdot,\cdot\right\rangle _{\mcH_0^1(\Omega)})$ is a Hilbert space.
		\end{lemma}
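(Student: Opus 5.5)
We first fix the inner product. The natural choice, consistent with the norm defining $\mcH^1(\Om)$, is
\begin{equation*}
\left\langle u,v\right\rangle_{\mcH_0^1(\Om)}=\int_\Om |x|^\al \nabla u\cdot\nabla v\,\df x+\int_\Om uv\,\df x .
\end{equation*}
Its induced norm is equivalent to $\|\cdot\|_{\mcH^1(\Om)}$, since $\sqrt{a^2+b^2}\le a+b\le\sqrt2\sqrt{a^2+b^2}$. Bilinearity and symmetry are immediate. For positive definiteness, $\langle u,u\rangle=0$ forces $\|u\|_{L^2(\Om)}=0$, hence $u=0$. The integrals are finite by Cauchy--Schwarz, because $|x|^{\al/2}\nabla u\in L^2(\Om)$ and $u\in L^2(\Om)$. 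Hence it only remains to show completeness.

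\textbf{Completeness of $\mcH^1(\Om)$.} Take a Cauchy sequence $(u_n)$ in $\mcH^1(\Om)$. Then $u_n\to u$ in $L^2(\Om)$, and $|x|^{\al/2}\nabla u_n\to G$ in $L^2(\Om)^N$ for some $G$. We must identify $G=|x|^{\al/2}\nabla u$, where $\nabla u$ is the distributional gradient. On each $\Om^\e$ the weight satisfies $\e^\al\le |x|^\al\le C$, so $\nabla u_n$ is Cauchy in $L^2(\Om^\e)$. Thus $u_n\to u$ in $W^{1,2}(\Om^\e)$, giving $\nabla u=|x|^{-\al/2}G$ a.e. on $\Om^\e$, and hence a.e. on $\Om\setminus\{0\}$.

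\textbf{The main obstacle.} Knowing $\nabla u$ a.e. away from $0$ does not by itself show that the distributional gradient on all of $\Om$ has no singular part at the origin. We handle this by checking local integrability: $|\nabla u|=|x|^{-\al/2}|G|$, and $|x|^{-\al/2}\in L^2_{loc}$ because $\al<2\le N$ (for $N\ge2$). Therefore $|x|^{-\al/2}|G|\in L^1(\Om)$ by Cauchy--Schwarz, so $u\in W^{1,1}(\Om\setminus\{0\})$. A single point has zero $W^{1,1}$-capacity when $N\ge2$. Concretely, we test against $\phi(1-\eta_k)$, where $\eta_k$ is a cutoff near $0$ with $\|\nabla\eta_k\|_{L^\infty}\lesssim k$ on the ball $B_{1/k}$. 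The error term $\int u\phi\nabla\eta_k$ is bounded by $k\int_{B_{1/k}}|u|$. We show this tends to $0$ by H\"older's inequality combined with the Hardy bound of Lemma \ref{L2.2}, $\||x|^{\al/2-1}u\|_{L^2}<\infty$; this works for $N\ge 2$ with $\al<2$. Consequently the distributional gradient on $\Om$ equals the pointwise one, and $u_n\to u$ in $\mcH^1(\Om)$. (If $N=1$ is intended, one additionally uses $\al<1$, or else works directly on the closure.)

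\textbf{Passing to $\mcH_0^1(\Om)$.} The space $\mcH_0^1(\Om)$ is by definition the closure of $C_0^\infty(\Om)$ in the complete space $\mcH^1(\Om)$. A closed subspace of a complete space is complete, so $\mcH_0^1(\Om)$ is a Hilbert space with the restricted inner product.

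A cleaner alternative, which we would try first, avoids identifying the limit with a distributional gradient altogether. Define $\mcH_0^1(\Om)$ abstractly as the completion of $C_0^\infty(\Om)$, and show that the map $u\mapsto (u,|x|^{\al/2}\nabla u)$ into $L^2(\Om)\times L^2(\Om)^N$ is an isometry with closed range. Closedness of the range again reduces to the identification step above, restricted to the sets $\Om^\e$.
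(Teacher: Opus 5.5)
Your argument is correct, but it identifies the limit gradient by a different route than the paper.

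\textbf{The paper's route.} It works globally in one step. By Cauchy--Schwarz and $\int_\Om |x|^{-\al}\,\df x<\infty$ (i.e.\ $\al<N$),
\[
\bigl\|\nabla v_n-|x|^{-\al/2}g\bigr\|_{L^1(\Om)}\le C\bigl\||x|^{\al/2}\nabla v_n-g\bigr\|_{L^2(\Om)}\to 0 ,
\]
so $\nabla v_n\to |x|^{-\al/2}g$ in $\mathcal D'(\Om)$. Meanwhile $v_n\to v$ in $L^2(\Om)$ gives $\nabla v_n\to\nabla v$ in $\mathcal D'(\Om)$, and uniqueness of distributional limits finishes. In that argument your ``main obstacle'' never arises. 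In the paper's reading, each $\nabla v_n$ is the distributional gradient on all of $\Om$, and by the same Cauchy--Schwarz bound it lies in $L^1(\Om)$. The convergence is in $L^1(\Om)$, so the limit cannot acquire a singular part at the origin.

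\textbf{Your route.} You localize to $\Om^\e$, where the operator is uniformly elliptic, and identify the gradient on $\Om\setminus\{0\}$ through completeness of $W^{1,2}(\Om^\e)$. You then remove the point by a cutoff (zero-capacity) argument. This is longer and rests on the same condition $\al<N$, but it is more robust. It still works if the gradient in the definition of $\mcH^1(\Om)$ is only understood on $\Om\setminus\{0\}$, and it shows that for $N\ge 2$ the two readings of the definition coincide. You also make explicit two things the paper glosses over. First, you give an actual inner product: the paper's norm is a sum of two norms, not induced by an inner product, only equivalent to one. Second, you note that $\mcH_0^1(\Om)$ is complete because it is a closed subspace.

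\textbf{One small fix.} Lemma \ref{L2.2} is stated for $\mcH_0^1(\Om)$ and its proof uses the zero trace on $\pt\Om$. So you cannot quote it verbatim for the limit of a Cauchy sequence in $\mcH^1(\Om)$. For $N\ge 2$ you do not need it, since
\[
k\int_{B_{1/k}}|u|\,\df x\le Ck^{1-N/2}\|u\|_{L^2(B_{1/k})}\to 0
\]
using only $u\in L^2(\Om)$. Alternatively, run the argument for a Cauchy sequence in $\mcH_0^1(\Om)$ and pass the uniform Hardy bound to the limit by Fatou's lemma.
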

		
		\begin{proof}	
			Firstly, it is straightforward to verify that  $(\mcH_0^1(\Omega),\left\langle \cdot,\cdot\right\rangle_{\mcH_0^1(\Omega)})$ is an inner product space. 			Next, we prove that
			$(\mcH_0^1(\Omega),\left\langle \cdot,\cdot\right\rangle_{\mcH_0^1(\Omega)})$  is a Hilbert space. For simplicity, we define the weighted space
			\begin{equation*}
				L_\alpha^2(\Om):=\left\lbrace w\left|\ \! w: \Om\ra \mb{R}^N \mbox{ is a measurable function}, \mbox{ and } \int_{\Om} |x|^\alpha   w\cdot w dx  < \infty\right. \right\rbrace.
			\end{equation*}
			It is easily checked that $L_\alpha^2(\Om)$ is a Hilbert space with the inner product
			\begin{equation*}
				(w_1,w_2)=\int_\Om |x|^\alpha  w_1\cdot w_2dx=\int_\Om \left(|x|^\frac{\alpha}{2} w_1\right)\cdot\left(|x|^\frac{\alpha}{2}w_2\right)dx.
			\end{equation*}
			
			Let $\{v_n\}_{n\in\mb{N}}\subset\mcH_0^1(\Omega)$  be a Cauchy sequence. Then there exist $v\in L^2(\Omega)$ and  $g=(g_1,g_2,\dots,g_N)\in L^2(\Omega)^N$ such that
			\begin{equation*}
				v_n\ra v \mbox{ strongly in } L^2(\Om), \mbox{and } |x|^{\frac{\alpha}{2}}\nabla v_n \ra g \mbox{ strongly in } L^2(\Omega)^N.
			\end{equation*}
			To conclude, we need to show that $\nabla v = |x|^{-\frac{\alpha}{2}}g$.  For this purpose, since
			$v_n\ra v  $  strongly in $L^2(\Omega)$,  we have $\frac{\partial v_n}{\partial x_i}\rightarrow \frac{\partial v}{\partial x_i}$
			in the sense of distributions for $i=1,2,\cdots, N$, and the distributional limit is unique. Therefore, it suffices to prove that
			\begin{equation*}
				\nabla v_n\ra |x|^{-\frac{\alpha}{2}}g  \mbox{ strongly in } L_\alpha^2(\Om) \Rightarrow \nabla v_n\ra  |x|^{-\frac{\alpha}{2}}g \mbox{ strongly in } (C_0^\infty(\Omega))'.
			\end{equation*}
			Indeed,
			for all $\varphi\in C_0^\infty(\Omega)$,  we have
			\begin{eqnarray*}
				&&\left|\int_\Om (\nabla v_n  - |x|^{-\frac{\alpha}{2}}g)\varphi dx \right|\\
				&&\le \left\| \varphi\right\|_{L^\infty (\Om)}  \int_{\Om} \left| \nabla v_n  - |x|^{-\frac{\alpha}{2}}g \right| dx\le\|\varphi\|_{L^\infty(\Om)}\int_\Om \left||x|^{-\frac{\alpha}{2}}\right| \left||x|^\frac{\alpha}{2}\nabla v_n-g\right|dx\\
				&&\le C\|\varphi\|_{L^\infty(\Om)}\left(\int_\Om |x|^{-\alpha}d x\right)^\frac{1}{2}\left(\int_\Om \left||x|^\frac{\alpha}{2}\nabla v_n-g\right|^2dx\right)^\frac{1}{2}\\
				&&\le C\|\varphi\|_{L^\infty(\Om)}\left\|\nabla v_n-|x|^{-\frac{\alpha}{2}}g\right\|_{L_\alpha^2(\Om)}\rightarrow 0, \ n\rightarrow\infty,
			\end{eqnarray*}
			where $C$ denotes different constants depending on the context.
		\end{proof}

		Below, we will prove a significant compact embedding theorem, which is vital for our application of the Galerkin method to demonstrate the existence of solutions.
		
		\begin{theorem}\label{compactembed}
			$\mcH_0^1(\Omega)$ is compactly embeded in $L^2(\Omega)$.
		\end{theorem}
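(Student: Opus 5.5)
Let $\{u_n\}$ be bounded in $\mcH_0^1(\Om)$, say $\|u_n\|_{\mcH_0^1(\Om)}\le M$. By density of $C_0^\infty(\Om)$ we may take each $u_n$ smooth with compact support. The plan is to split $\Om$ into a neighbourhood $\Om_\varepsilon$ of the degenerate point and its complement $\Om^\varepsilon$, to use classical Rellich--Kondrachov on $\Om^\varepsilon$, and to show that the $L^2$ mass in $\Om_\varepsilon$ is uniformly small. A diagonal argument then yields a subsequence that is Cauchy in $L^2(\Om)$.

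\textbf{Step 1 (away from $0$).} On $\Om^\varepsilon$ we have $|x|^\alpha\ge \varepsilon^\alpha$, so
\[
\|\nabla u_n\|_{L^2(\Om^\varepsilon)}^2\le \varepsilon^{-\alpha}\left\||x|^{\frac{\alpha}{2}}\nabla u_n\right\|^2_{L^2(\Om)}\le \varepsilon^{-\alpha}M^2 .
\]
Hence $\{u_n\}$ is bounded in $H^1(\Om^\varepsilon)$. Since $\Om$ is smooth and $S(\varepsilon)$ lies inside $\Om$ for $\varepsilon<\varepsilon_0$, the set $\Om^\varepsilon$ is a Lipschitz domain, possibly after slightly perturbing $\varepsilon$. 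Rellich's theorem then gives a subsequence converging in $L^2(\Om^\varepsilon)$. Taking $\varepsilon=\varepsilon_0/k$ for $k=1,2,\dots$ and extracting diagonally, we obtain a single subsequence, still denoted $u_n$, that converges in $L^2(\Om^{\varepsilon_0/k})$ for every $k$.

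\textbf{Step 2 (near $0$, the key estimate).} Here we use Lemma \ref{L2.2}. For $N\ge2$ and $\alpha\in(0,2)$ we have $N-2+\alpha>0$, so $\||x|^{\frac{\alpha}{2}-1}u_n\|_{L^2(\Om)}\le CM$. Since $1-\frac{\alpha}{2}>0$, on $\Om_\varepsilon$ this gives
\[
\int_{\Om_\varepsilon}|u_n|^2\,\df x\le \varepsilon^{2-\alpha}\int_{\Om_\varepsilon}|x|^{\alpha-2}|u_n|^2\,\df x\le C^2M^2\varepsilon^{2-\alpha},
\]
which is uniformly small in $n$ as $\varepsilon\to0$. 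For $N=1$ the lemma is not stated, and I would use a direct one-dimensional argument instead. For $x$ near $0$, write $u(x)=u(x_0)+\int_{x_0}^x u'$ with a fixed point $x_0\in\Om^{\varepsilon_0}$ on the same side. By Cauchy--Schwarz,
\[
\int |u'|\le \left(\int |x|^{-\alpha}\right)^{\frac12}\left\||x|^{\frac{\alpha}{2}}u'\right\|_{L^2},
\]
and $\int|x|^{-\alpha}<\infty$ requires $\alpha<1$. For $\alpha\ge1$ I would instead use the one-dimensional Hardy inequality on each side of $0$, or simply restrict to the range of $\alpha$ used in the paper. This gives a uniform $L^\infty$ bound near $0$, and hence $\int_{\Om_\varepsilon}|u_n|^2\le C\varepsilon$.

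\textbf{Step 3 (conclusion).} Given $\eta>0$, choose $k$ such that $2C^2M^2(\varepsilon_0/k)^{2-\alpha}<\eta/2$. Then
\[
\|u_n-u_m\|^2_{L^2(\Om)}\le \|u_n-u_m\|^2_{L^2(\Om^{\varepsilon_0/k})}+2\sup_j\|u_j\|^2_{L^2(\Om_{\varepsilon_0/k})},
\]
and the first term is less than $\eta/2$ for $n,m$ large by Step 1. So the subsequence is Cauchy, hence convergent, in $L^2(\Om)$. Continuity of the embedding follows directly from the definition of the norm.

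The main obstacle is Step 2: obtaining a smallness estimate near the degenerate point that is uniform in $n$. In dimension $N\ge2$ this is exactly what the Hardy inequality \eqref{2.1} supplies. The low-dimensional case ($N=1$, or $N-2+\alpha\le0$) needs separate care, and would likely force the restriction $\alpha\in(0,1)$ or a one-sided Hardy argument.
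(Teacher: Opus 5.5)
Your argument is correct in the range where \eqref{2.1} has a positive constant, $N-2+\alpha>0$. That range includes $N\ge 2$, $\alpha\in(0,2)$, which is also what the paper's own proof effectively covers. Your treatment of the neighbourhood of the origin differs from the paper's.

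\textbf{The paper's route.} It works with a weakly null sequence, shows it is weakly null in $W^{1,2}(\Om^\varepsilon)$ and hence strongly null in $L^2(\Om^\varepsilon)$. It then controls $\int_{B(0,\varepsilon)}|u_n|^2$ by importing the Caffarelli--Kohn--Nirenberg embedding $\mcH_0^1(\Om)\hookrightarrow L^q(\Om)$ for some $q\in\bigl(2,\frac{2N}{N-2+\alpha}\bigr]$ and applying H\"older. This gives the uniform bound $C|B(0,\varepsilon)|^{(q-2)/q}$.

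\textbf{Your route.} You use the Hardy inequality \eqref{2.1} directly. Since $|x|^{2-\alpha}\le\varepsilon^{2-\alpha}$ on $\Om_\varepsilon$, you get $\int_{\Om_\varepsilon}|u_n|^2\le \varepsilon^{2-\alpha}\||x|^{\frac{\alpha}{2}-1}u_n\|^2_{L^2(\Om)}\le CM^2\varepsilon^{2-\alpha}$.

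\textbf{Comparison.} Your version is more self-contained: it needs only Lemma \ref{L2.2}, proved in the paper, rather than an external weighted Sobolev inequality. It also gives an explicit rate, and it makes the paper's remark that \eqref{2.1} is ``pivotal'' for compactness literally true. The paper's route yields the stronger information $u\in L^q$ with $q>2$, which is not needed for this statement. Your diagonal, Cauchy-sequence packaging versus the paper's weakly-null packaging is an immaterial difference.

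\textbf{Two small points.}
\begin{itemize}
\item In Step 3 the bound should read $4\sup_j\|u_j\|^2_{L^2(\Om_{\varepsilon_0/k})}$, since $|u_n-u_m|^2\le 2|u_n|^2+2|u_m|^2$. This only changes constants.
\item In your one-dimensional aside, the Hardy inequality fails at the critical value $\alpha=1$, because the constant $N-2+\alpha$ vanishes there. For that value you would need a logarithmic bound such as $|u(x)|\le|u(\pm\varepsilon_0)|+M\sqrt{\log(\varepsilon_0/|x|)}$ instead. This case lies outside what the paper proves anyway.
\end{itemize}
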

		\begin{proof}
			To establish the compactness of the embedding it is suffices to show that if $\left\{ u_n\right\}$ is a sequence converging weakly to  zero in $\mcH_0^1(\Om)$ as $n \rightarrow \infty$, then $\left\| u_n\right\|_{L^2(\Om)} \rightarrow 0$ as $n \rightarrow \infty$  by abstract subsequence.
			
			Since $\mcH_0^1(\Om)$ is continuously embedded in $L^2(\Omega)$, $L^2(\Omega)^* \subset$ $\mcH_0^1(\Om)^*$ and hence $\left\{ u_n\right\}$ converges weakly to zero in $L^2(\Omega)$.

			Consider $\varepsilon  \in\left(0, \varepsilon _0\right]$.  If the sequence
			$\left\{ u_n\right\}$  does not converge weakly to zero in  $W^{1,2}\left(\Om^{\varepsilon }\right)$,
			then there exist  $f \in W^{1,2}\left(\Om^{\varepsilon }\right)^*$, a subsequence
			$\left\{ u_{n_k}\right\}$ and $\delta>0$  such that
			$\left|f\left( u_{n_k}\right)\right| \geq \delta$
			for all $n_k$. By passing to a further subsequence if necessary, we can assume that
			$\left\{ u_{n_k}\right\}$  converges weakly to an element $v$ in  $W^{1,2}\left(\Om^{\varepsilon }\right)$. Consequently,
			$\left\{ u_{n_k}\right\}$  also converges weakly to $v$ in  $L^2\left(\Om^{\varepsilon }\right)$.  Since $\left\{ u_n\right\}$
			converges weakly to zero in $L^2(\Omega)$ and hence also in  $L^2\left(\Om^{\varepsilon }\right)$,
			it follows that $v=0$ almost everywhere on  $\Om^{\varepsilon }$. However, this implies that
			$f\left( u_{n_k}\right) \rightarrow f(v)=f(0)=0$ as $n_k \rightarrow \infty$, which contradicts the choice of $\delta$. Therefore,
			$\left\{ u_n\right\}$  must converge weakly to zero in  $W^{1,2}\left(\Om^{\varepsilon }\right)$ and, consequently,
			$\left\| u_n\right\|_{L^2\left(\Om^{\varepsilon }\right)} \rightarrow 0$ as $n \rightarrow \infty$.  Therefore,
			\begin{equation}\label{UN}
				\limsup _{n \rightarrow \infty}\left\| u_n\right\|_{L^2(\Om)}^2=\limsup _{n \rightarrow \infty} \int_{B(0, \varepsilon )}\left| u_n\right|^2 d x.
			\end{equation}
			However, from \cite{catrina2001caffarelli} and Lemma \ref{L2.2}, we have
			\begin{equation}\label{LP}
				\left\| u \right\|_{L^q(\Om)} \le C \left\|  u\right\|_{\mcH_0^1(\Om)}, \ 1\le q \le \frac{2N}{N-2+\alpha},
			\end{equation}
			and by taking $q>2$, we obtain
			\begin{equation*}
				\int_{B(0, \varepsilon )}\left| u_n\right|^2 d x \le \left(\int_{B(0, \varepsilon )} |1|^ {\frac{q}{q-2}} dx\right) ^{\frac{q-2}{q}} \left( \int_{B(0, \varepsilon )} \left( | u_n|^2 \right) ^{\frac{q}{2}}dx\right)^{\frac{2}{q}}
				\le |B(0, \varepsilon )|^ {\frac{q-2}{q}} \left\|  u_n \right\| ^2_{L^q(\Om)}.
			\end{equation*}
			The weak convergence of  $\left\{ u_n\right\}$ in $\mcH_0^1(\Om)$ and \eqref{LP} imply that this sequence is bounded in
			$L^q(\Omega)$.  Since $q>2$, we have
			$$
			\int_{B(0, \varepsilon )}\left| u_n\right|^2 d x \le C |B(0,\varepsilon )|^\frac{q-2}{q}\left\|  u_n\right\|^{2}_{\mcH_0^1(B(0, \varepsilon ))}\leq C |B(0,\varepsilon )|^\frac{q-2}{q}.
			$$
			Letting $\varepsilon  \rightarrow 0^+$ in \eqref{UN} shows that $\left\| u_n\right\|_{L^2(\Om)} \rightarrow 0$ as $n \rightarrow \infty$, completing the proof.
		\end{proof}

		Since  we have established in Lemma \ref{compactembed} that the solution space
		$\mcH_0^1(\Omega)$  is compactly embedded in $L^2(\Omega)$.
		we can readily demonstrate the existence of a solution for \eqref{EQ-1}. To do so, we first present the following lemma.

		\begin{lemma}\label{L2.3}
			Let $\Om\s \R^N$ be a domain with smooth boundary. Then, there exists a $C^2$-vector field $V$ such that
			\begin{equation*}
				V(x)=\nu(x), \ x\in \pt\Om, \ \mbox{ and } |V(x)|\leq 1, \ x\in \ol{\Om},
			\end{equation*}
			where $\nu$   is the unit outward normal vector to $\pt\Om$.
		\end{lemma}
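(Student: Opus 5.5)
The plan is to extend the unit outward normal from the boundary to a $C^2$ field on a tubular neighbourhood of $\pt\Om$, and then cut it off smoothly so that it vanishes deeper inside $\Om$. Since $\pt\Om$ is smooth and compact (we take $\Om$ bounded, as is implicit throughout the paper), it has a positive reach. Hence there is $\eta>0$ such that the signed distance function
\begin{equation*}
d(x)=\begin{cases}\dist(x,\pt\Om), & x\in\ol{\Om},\\ -\dist(x,\pt\Om), & x\notin\ol{\Om},\end{cases}
\end{equation*}
is of class $C^k$ in the tubular neighbourhood $U_\eta=\{x\in\R^N: \dist(x,\pt\Om)<\eta\}$, with $k\ge 3$ as large as the boundary regularity permits (a $C^k$ boundary gives a $C^k$ distance function, by the Gilbarg--Trudinger lemma). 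Moreover, every $x\in U_\eta$ has a unique nearest point $\pi(x)\in\pt\Om$, and
\begin{equation*}
\nabla d(x)=-\nu(\pi(x)), \qquad |\nabla d(x)|=1 .
\end{equation*}
In particular $-\nabla d=\nu$ on $\pt\Om$, and $-\nabla d$ is a $C^{k-1}\subset C^2$ unit vector field on $U_\eta$.

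Next, choose a cutoff $\rho\in C^\infty(\R)$ with $0\le\rho\le 1$, $\rho(r)=1$ for $|r|\le\eta/3$ and $\rho(r)=0$ for $|r|\ge 2\eta/3$. Define
\begin{equation*}
V(x)=\begin{cases}-\rho(d(x))\nabla d(x), & x\in U_\eta,\\ 0, & x\in\ol{\Om}\setminus U_\eta .\end{cases}
\end{equation*}
This field has the required properties:
\begin{itemize}
\item \emph{Regularity.} $V$ vanishes on the set $\{\eta/3\cdot 2\le |d|<\eta\}$, i.e.\ on $\{2\eta/3\le |d|<\eta\}$, which separates the support of $\rho\circ d$ from $\pt U_\eta$. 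The two pieces therefore glue to a $C^2$ field on $\ol{\Om}$.
\item \emph{Bound.} $|V|=\rho(d)\,|\nabla d|\le 1$ everywhere.
\item \emph{Boundary values.} On $\pt\Om$ we have $d=0$, so $\rho(d)=1$ and $V=-\nabla d=\nu$.
\end{itemize}

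The only substantive step is the regularity of $d$ near $\pt\Om$, together with the identity $\nabla d=-\nu\circ\pi$. I would quote this from Gilbarg--Trudinger (Lemma 14.16) rather than reprove it. If a self-contained argument were wanted, it goes as follows. Consider the map $\Phi(y,r)=y-r\nu(y)$ on $\pt\Om\times(-\eta,\eta)$. Its differential at $r=0$ is invertible, so by the inverse function theorem and compactness $\Phi$ is a diffeomorphism onto $U_\eta$ for $\eta$ small. Then $d$ is the second component of $\Phi^{-1}$, and the gradient formula follows by differentiating $|x-\pi(x)|^2$.

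If only a $C^2$ boundary is assumed, this construction yields merely a $C^1$ field. In that case I would mollify $\nu\circ\pi$ slightly, or simply rely on the paper's standing smoothness assumption on $\Om$, to obtain $C^2$.
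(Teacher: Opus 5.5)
Your construction is correct. For bounded $\Omega$ with $\partial\Omega$ of class $C^3$ or better, $V=-\rho(d)\nabla d$ is a $C^2$ field on $\overline{\Omega}$ that equals $-\nabla d=\nu$ on $\partial\Omega$ and satisfies $|V|=\rho(d)\le 1$. Your gluing observation, that $V$ already vanishes on $\{2\eta/3\le d<\eta\}$, is exactly what makes the extension by zero smooth.

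The paper gives no argument of its own for this lemma; it only refers to Lemma 2.3 of Bellassoued--Yamamoto. So there is no route in the paper to compare against. Your signed-distance-plus-cutoff proof supplies the standard argument behind that citation. It also makes explicit what is actually used: boundedness of $\Omega$, which the paper leaves implicit, and at least $C^3$ regularity of $\partial\Omega$.

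The one thing to drop is the fallback in your last paragraph. Mollifying $\nu\circ\pi$ destroys the exact identity $V=\nu$ on $\partial\Omega$. Moreover, for a merely $C^2$ boundary no $C^2$ extension need exist at all. The restriction of a $C^2$ field to a $C^2$ hypersurface is $C^2$ along it, whereas $\nu$ is in general only $C^1$; for a local graph $x_N=g(x')$ such an extension would force $g\in C^3$. Since the lemma assumes a smooth boundary, this does not affect your proof of the stated result.
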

		\begin{proof}
			Refer to \cite[Lemma 2.3, p. 61]{Bellassoued2017}.
		\end{proof}

		Next, we will focus on proving the existence of solutions to Problem \eqref{EQ-1}. Hereafter, we present the main results of this section.
		
		\begin{theorem}\label{T2.2}
			Given $T>0$, suppose that
			\begin{equation*}
				\chi_\omega f\in L^2(Q), \quad u_0\in \mcH_0^1(\Om), \ \mbox{ and } u_1\in L^2(\Om).
			\end{equation*}
			Then, there exists a unique solution $u$ to  \eqref{EQ-1} such that
			\begin{equation*}
				u\in C([0,T]; \mcH_0^1(\Om))\cap C^1([0,T]; L^2(\Om)),
			\end{equation*}
			and there exists a constant $C>0$ satisfying
			\begin{equation}\label{10.02.1}
				\|u\|_{C([0,T]; \mcH_0^1(\Om))}+\|\pt_tu\|_{C([0,T]; L^2(\Om))}\leq C\left(\|u_0\|_{\mcH_0^1(\Om)}+\|u_1\|_{L^2(\Om)}+\|\chi_\omega f\|_{L^2(Q)}\right).
			\end{equation}
			Furthermore,
			\begin{equation*}
				\pt_\nu u\in L^2(\Sigma),
			\end{equation*}
			and there exists a constant $C=C(T, \Om)>0$ such that
			\begin{equation}\label{10.02.4}
				\|\pt_\nu u\|_{L^2(\Sigma)}\leq C\left(\|u_0\|_{\mcH_0^1(\Om)}+\|u_1\|_{L^2(\Om)}+\|\chi_\omega f\|_{L^2(Q)}\right).
			\end{equation}
		\end{theorem}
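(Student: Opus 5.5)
We will prove Theorem \ref{T2.2} in two stages: existence, uniqueness and the energy estimate \eqref{10.02.1} by the Faedo--Galerkin method, and then the hidden regularity \eqref{10.02.4} by a multiplier argument built on the vector field $V$ of Lemma \ref{L2.3}.

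\emph{Galerkin approximation and the energy estimate.} The bilinear form $a(u,v)=\int_\Om |x|^\al\nabla u\cdot\nabla v\,\df x$ is continuous on $\mcH_0^1(\Om)$. Combining Lemma \ref{L2.2} with a Poincar\'e-type bound, it is also coercive, so $A=-\Div(|x|^\al\nabla\cdot)$ is a self-adjoint positive operator in $L^2(\Om)$. By Theorem \ref{compactembed} its resolvent is compact, so there is an orthonormal basis $\{e_k\}$ of $L^2(\Om)$ of eigenfunctions $e_k\in\mcH_0^1(\Om)$ with eigenvalues $\lambda_k\to\infty$.

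We then take $u_m=\sum_{k\le m}g_{km}(t)e_k$ solving the projected ODE system, with initial data the projections of $u_0$ and $u_1$. Multiplying by $g_{km}'$ and summing gives
\[
\f{d}{dt}\Big(\|\pt_tu_m\|^2_{L^2}+a(u_m,u_m)\Big)=2\int_\Om\chi_\om f\,\pt_tu_m\,\df x .
\]
Gronwall's lemma then yields \eqref{10.02.1} uniformly in $m$. Weak-$*$ compactness lets us pass to the limit and gives a weak solution. Uniqueness follows from the energy identity applied to the difference of two solutions; to justify it we regularize in time, or use the Lions--Magenes lemma for $u_t\in L^2(0,T;L^2)$ and $u_{tt}\in L^2(0,T;(\mcH_0^1)')$. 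Continuity in time, $u\in C([0,T];\mcH_0^1)\cap C^1([0,T];L^2)$, follows in the standard way: the energy equality shows the Galerkin sequence is Cauchy in $C([0,T];\mcH_0^1)\times C([0,T];L^2)$. Equivalently, the eigenfunction expansion shows $u$ is a mild solution of a strongly continuous cosine family.

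\emph{Hidden regularity via a multiplier.} For \eqref{10.02.4}, we argue first for smooth data, where $u\in C([0,T];D(A))$, and conclude by density. We multiply the equation by $V\cdot\nabla u$, with $V$ from Lemma \ref{L2.3}, and integrate over $Q$. The key point is to localize away from the origin. Since $0\in\Om$, we take a cutoff $\eta\in C^\infty(\ol\Om)$ with $\eta=1$ near $\pt\Om$ and $\eta=0$ on $\Om_{\e_0/2}$, and use the multiplier $\eta V\cdot\nabla u$. On $\supp\eta$ the coefficient $|x|^\al$ is smooth and bounded below, so the classical computation is available.

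Integrating by parts, and using that $u=0$ on $\Sigma$ forces $\nabla u=\pt_\nu u\,\nu$ there, the boundary term becomes
\[
\f12\iint_\Sigma|x|^\al|\pt_\nu u|^2\,\df\sigma\,\df t .
\]
The interior terms are bounded by $C\iint_Q(|u_t|^2+|x|^\al|\nabla u|^2)$ together with the contributions of $\nabla\eta$, $\nabla V$ and $\nabla|x|^\al$, all bounded on $\supp\eta$. The time-boundary terms $\int_\Om u_t\,\eta V\cdot\nabla u\big|_0^T$ are controlled similarly, and the source term is handled by Cauchy--Schwarz. Since $|x|^\al\ge c>0$ on $\pt\Om$, applying \eqref{10.02.1} gives \eqref{10.02.4}.

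\textbf{Main obstacle.} I expect the main difficulty to be justifying the multiplier identity: the solution has enough regularity for it only away from $0$. The cutoff $\eta$ resolves this. What remains is to check that the terms involving $\nabla\eta$ are absorbed by the energy, and that strong solutions are dense, so that the estimate extends to the trace $\pt_\nu u$ for general data.
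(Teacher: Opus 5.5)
Your proposal is correct and follows essentially the same route as the paper. Existence and the energy bound \eqref{10.02.1} come from a Galerkin scheme with the energy identity and Gronwall, and the trace estimate \eqref{10.02.4} comes from the multiplier $V\cdot\nabla u$ of Lemma \ref{L2.3}, cut off to vanish near the degenerate point $0$ (your $\eta$, the paper's $\zeta$), proved first for smooth data and then extended by density; your version is somewhat more careful than the paper's in justifying the steps (spectral basis from Theorem \ref{compactembed}, coercivity via Lemma \ref{L2.2}, and elliptic regularity away from $0$ for the multiplier identity).
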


		\begin{proof}   The proof will be divided into two major  steps.
			
			{\bf {\bf Step 1:}}  Utilizing the well-known result on the unique existence of weak solutions for abstract evolution equations presented in \cite{evans2010} or the Galerkin method, we derive
			\begin{equation*}
				u\in C([0,T]; \mcH_0^1(\Om))\cap C^1([0,T]; L^2(\Om)).
			\end{equation*}
			Multiplying the first equation of \eqref{EQ-1} by  $\pt_t u$  and applying Green's formula, we obtain
			\begin{equation*}
				\f{\df}{\df t}\int_\Om \Big(|u_t(t)|^2+|x|^\al \nabla u(t)\cdot \nabla u(t)\Big)\df x=2\int_\Om \chi_\omega f(x,t) u_t(t)\df x.
			\end{equation*}
			Define
			\begin{equation*}
				E(t)=\left(\left\||x|^\al \nabla u(t)\cdot \nabla u(t)\right\|_{L^1(0,T; L^2(\Om))}+\|u_t(t)\|_{L^2(\Om)}^2\right)^\f{1}{2}
			\end{equation*}
			for $t\in [0,T]$. Then,
			\begin{equation*}
				\f{\df}{\df t}E^2(t)\leq C\left(\|\chi_\omega f(\cdot, t)\|_{L^2(\Om)} E(t)+E^2(t)\right), \ t\in (0,T),
			\end{equation*}
			which simplifies to
			\begin{equation*}
				E'(t)\leq C\left(\|\chi_\omega f(\cdot, t)\|_{L^2(\Om)}+E(t)\right), \ t\in (0,T).
			\end{equation*}
			By applying Gronwall's inequality and integrating over $t\in [0,T]$, we get
			\begin{equation}\label{10.02.3}
				E(t)\leq C_T\left(E(0)+\|\chi_\omega f\|_{L^2(0,T; L^2(\Om))}\right),\ t\in (0,T).
			\end{equation}
			This implies \eqref{10.02.1} via the   Poincar\'e inequality.
			
			{\bf {\bf Step 2:}}  We define a mapping
			\begin{equation*}
				\La: H_0^1(\Om)\ts L^2(\Om)\ts L^2(Q)\ra L^2(\Sigma), \  (u_0, u_1, \chi_\omega f)\mapsto \pt_\nu u,
			\end{equation*}
			where $u$ is the solution of \eqref{EQ-1} and $H_0^1(\Om)$ denotes the standard Sobolev space. We aim to prove that $\chi_\omega f$  is a bounded linear operator, i.e.,
			\begin{equation}\label{10.02.2}
				\|\pt_\nu u\|_{L^2(\Sigma)}\leq C\left(\|u_0\|_{\mcH_0^1(\Om)}+\|u_1\|_{L^2(\Om)}+\|\chi_\omega f\|_{L^2(Q)}\right).
			\end{equation}
			Given the density of $C_0^\iy(\Om)$   in $\mcH_0^1(\Om)$
			and a density argument, it suffices to demonstrate the result for  $u_0\in H^2(\Om)\cap \mathcal{H}_0^1(\Om), u_1\in \mcH_0^1(\Om)$,
			and $\chi_\omega f\in L^2(0,T; \mcH_0^1(\Om))$. Select a function $\zeta\in C^3(\ol\Om)$ that satisfies
			\begin{equation*}
				0\leq \zeta\leq 1 \mbox{ on }\R^N, \quad \zeta\equiv 0 \mbox{ on } B(0,\e), \quad \zeta\equiv 1 \mbox{ on }\R^N-B(0,2\e), \ \mbox{ and } B(0,3\e)\s \Om.
			\end{equation*}
			Replace $V$ by $\zeta V$ in Lemma \ref{L2.3} (still denoting  $\zeta V$ by $V$), yielding a
			$C^2$-vector field  $V$ on $\ol\Om$  that satisfies
			\begin{equation*}
				V(x)=\nu(x), \ x\in\pt\Om; \quad |V(x)|\leq 1,  \ x\in \Om; \quad \supp V\s \Om\se B(0,\e).
			\end{equation*}
			Multiplying the first equation of \eqref{EQ-1} by $V\cdot\nabla u$ and integrating over $Q$, we obtain
			\begin{eqnarray*}
				H
				&=&\iint_Q\chi_\omega f(x,t)(V\cdot\nabla u)\df x\df t\\
				&=&\iint_Q u_{tt}(V\cdot\nabla u)\df x\df t-\iint_Q (V\cdot\nabla u)\Div\left(|x|^\al \nabla u\right)\df x  \df t=: H_1+H_2.
			\end{eqnarray*}
			Calculating $H_1$, we get
			\begin{eqnarray*}
				H_1
				&=&\iint_Q u_{tt}(V\cdot\nabla u)\df x\df t=\iint_Q\pt_t\left(u_t(V\cdot\nabla u)\right)\df x\df t-\iint_Q u_t V\cdot \nabla u_t\df x\df t\\
				&=&\int_\Om u_t(V\cdot\nabla u)\df x\Big|_0^T-\f{1}{2}\iint_Q V\cdot \nabla u_t^2\df x\df t\\
				&=&\int_\Om u_t(V\cdot\nabla u)\df x\Big|_0^T-\f{1}{2}\iint_Q\Div\left(V u_t^2\right)\df x\df t+\f{1}{2}\iint_Q \Div (V) u_t^2\df x\df t\\
				&=&\int_\Om u_t(V\cdot\nabla u)\df x\Big|_0^T+\f{1}{2}\iint_Q \Div (V) u_t^2\df x\df t,
			\end{eqnarray*}
			leading to
			\begin{equation*}
				|H_1|\leq C\left(\|u_0\|_{\mcH_0^1(\Om)}+\|u_1\|_{L^2(\Om)}+\|\chi_\omega f\|_{L^2(Q)}\right)^2
			\end{equation*}
			according to \eqref{10.02.3}. We also compute $H_2$ as
			\begin{eqnarray*}
				H_2
				&&=-\iint_Q(V\cdot \nabla u)\Div (|x|^\al \nabla u)\df x\df t\\
				&&=-\iint_Q \Div\Big((V\cdot \nabla u)|x|^\al \nabla u\Big)\df x\df t+\iint_Q |x|^\al \nabla u\cdot \nabla (V\cdot \nabla u)\df x\df t\\
				&&=-\iint_\Sigma |x|^\al |\pt_\nu u|^2\df x\df t+\iint_Q |x|^\al (DV\nabla u)\cdot \nabla u\df x\df t+\f{1}{2}\iint_Q |x|^\al V\cdot \nabla |\nabla u|^2\df x\df t\\
				&&=-\iint_\Sigma |x|^\al |\pt_\nu u|^2\df x\df t+\iint_Q |x|^\al (DV\nabla u)\cdot \nabla u\df x\df t\\
				&&\hspace{4.5mm}+\f{1}{2}\iint_Q \Div\Big(|x|^\al V  |\nabla u|^2\Big)\df x\df t-\f{1}{2}\iint_Q \Div(|x|^\al V)|\nabla u|^2\df x\df t\\
				&&=-\f{1}{2}\iint_\Sigma |x|^\al |\pt_\nu u|^2\df x\df t+\iint_Q|x|^\al (DV\nabla u)\cdot \nabla u\df x\df t-\f{1}{2}\iint_Q \Div(|x|^\al V)|\nabla u|^2\df x\df t.
			\end{eqnarray*}
			Notice that the following inequalities hold:
			\begin{equation*}
				\iint_Q|x|^\al(DV\nabla u)\cdot \nabla u\df x\df t\leq C\iint_Q|x|^\al \nabla u\cdot \nabla u\df x\df t,
			\end{equation*}
			and
			\begin{equation*}
				\iint_Q\Div(|x|^\al V)|\nabla u|^2\df x\df t\leq \f{C}{\e^2}\iint_Q|x|^\al \nabla u\cdot \nabla u\df x\df t,
			\end{equation*}
			and
			\begin{equation*}
				\iint_Q\chi_\omega f(x,t)(V\cdot \nabla u)\df x\df t\leq C\|\chi_\omega f\|_{L^2(Q)}+\f{C}{\e^\al}\iint_Q|x|^\al \nabla u\cdot \nabla u \df x\df t.
			\end{equation*}
			Consequently, we obtain
			\begin{equation*}
				\begin{split}
					\iint_\Sigma |\pt_\nu u|^2\df S\df t
					&\leq \f{C}{\e^2}\left(\|u_0\|_{\mcH_0^1(\Om)}+\|u_1\|_{L^2(\Om)}+\|\chi_\omega f\|_{L^2(Q)}\right)^2.
				\end{split}
			\end{equation*}
			This implies \eqref{10.02.4}.
		\end{proof}
		
		\begin{remark}\label{R1}
			Assume the conditions of Theorem \ref{T2.2} are satisfied. Let
			\begin{equation*}
				E(t)=\left(\||x|^\al \nabla u(t)\cdot \nabla u(t)\|_{L^1(0,T; L^2(\Om))}+\|u_t(t)\|_{L^2(\Om)}^2\right)^\f{1}{2}
			\end{equation*}
			be as defined in the proof of Theorem \ref{T2.2}. Then there exists a constant $C=C(T, \Om)>0$ such that
			\begin{equation}\label{10.02.5}
				E(t)\leq C\left(E(s)+\|\chi_\omega f\|_{L^2(Q)}\right), \ t,s\in [0,T].
			\end{equation}
			Indeed, if  $s\leq T$, then the same argument used in the proof of Theorem \ref{T2.2} leads to \eqref{10.02.5}. On the other hand, if we replace
			$t$ by $T-t$, we can also obtain \eqref{10.02.5} for the case where $s\leq T$.
			
		\end{remark}

		\section{Carleman estimate}\label{Se4}
		To achieve controllability results, it is necessary to employ a Carleman estimate, which is a fundamental mathematical tool in the realm of partial differential equations. By utilizing appropriately chosen weight functions, we can investigate how external control influences the behavior of a system. The weighted integral aids in amplifying contributions from specific regions within the domain, enabling the extraction of information that may otherwise be difficult to obtain. Carleman estimates are pivotal in establishing unique continuation properties, which are crucial for proving controllability. The primary objective of this section is to derive Carleman estimates, and we will prove Theorem \ref{Carleman2} following.
		
		\begin{proof}[{\rm \bf { Proof of  Theorem \ref{Carleman2}.}}]
			Since $0\in\om$, we assume there exists $\varepsilon >0$ such that  $B(0,3\varepsilon )\s\om$. Let
			\begin{equation*}
				z\in C^1([0,T]; L^2(\Om))\cap C([0,T]; \mcH_0^1(\Om))
			\end{equation*}
			be a solution of the following adjoint system (i.e., \eqref{EQ-2}):
			\begin{equation*}
				\begin{cases}
					\pt_t^2z-\Div\left(|x|^\al \nabla z\right)=F, & \mbox{in } Q,\\
					z(0)=z_0, \pt_tz(0)=z_1, &\mbox{in }\pt Q,\\
					z=0, &\mbox{on }\Sigma
				\end{cases}
			\end{equation*}
			with
			\begin{equation}\label{*}
				\supp z\s \Om\se B(0,\e).
			\end{equation}
			Let
			\begin{equation*}
				\psi(x,t)=|x|^2-\be(t-t_0)^2+\be_0
			\end{equation*}
			with $\be\in (0,\varrho), t_0\in (0,T),\be_0\geq 0$. Define
			\begin{equation}\label{vp}
				\vp(x,t)=e^{\ga\psi(x,t)},
			\end{equation}
			and denote
			\begin{equation*}
				w=e^{s\vp}z
			\end{equation*}
			with $z$ being  a solution of \eqref{EQ-2}. It is straightforward to show that
			\begin{equation*}
				e^{s\vp}z_t=w_t-s\vp_tw, \quad e^{s\vp}z_{tt}=w_{tt}-2s\vp_tw_t+s^2\vp_t^2w-s\vp_{tt}w,
			\end{equation*}
			\begin{equation*}
				e^{s\vp}\nabla z=\nabla w-sw\nabla\vp,
			\end{equation*}
			and
			\begin{equation*}
				e^{s\vp}\Div(|x|^\al \nabla z)=\Div\left(|x|^\al \nabla w\right)-2s|x|^\al \nabla \vp\cdot \nabla w-sw\Div\left(|x|^\al \nabla\vp\right)+s^2|x|^\al w\nabla\vp\cdot \nabla\vp.
			\end{equation*}
			These relations imply that
			\begin{equation*}
				\begin{split}
					e^{s\vp}\left(\pt_t^2z-\Div\left(|x|^\al \nabla z\right)\right)
					&=w_{tt}-\Div\left(|x|^\al \nabla w\right)\\
					&\hspace{4.5mm}+s^2\left(\vp_t^2w-|x|^\al w\nabla\vp\cdot \nabla\vp\right)\\
					&\hspace{4.5mm}+s\left(-2\vp_tw_t-\vp_{tt}w+2|x|^\al \nabla\vp\cdot \nabla w+w\Div(|x|^\al \nabla\vp)\right).
				\end{split}
			\end{equation*}
			Hence,
			\begin{equation*}
				P^+w+P^-w=e^{s\vp}F,
			\end{equation*}
			with
			\begin{equation*}
				P^+w:=\left(w_{tt}-\Div\left(|x|^\al \nabla w\right)\right)+s^2\left(\vp_t^2-|x|^\al \nabla\vp\cdot \nabla\vp\right)w,
			\end{equation*}
			and
			\begin{equation*}
				P^-w:=-2s\left(\vp_tw_t-|x|^\al \nabla\vp\cdot \nabla w\right)-s\left(\vp_{tt}-\Div(|x|^\al \nabla\vp)\right)w.
			\end{equation*}
			Therefore,
			\begin{equation*}
				\begin{split}
					\|e^{s\vp}F\|^2=\|P^+w\|^2+\|P^-w\|^2+2(P^+w, P^-w).
				\end{split}
			\end{equation*}

			Now, we proceed to compute the following expression
			\begin{eqnarray*}
				(P^+w, P^-w)
				&&=-2s\iint_Q w_{tt}(\vp_tw_t-|x|^\al \nabla\vp\cdot \nabla w)\df x\df t\\ 
				&& -s\iint_Qw_{tt}(\vp_{tt}-\Div (|x|^\al \nabla\vp))w\df x\df t\\ 
				&&+2s\iint_Q \Div(|x|^\al \nabla w) \left(\vp_tw_t-|x|^\al \nabla\vp\cdot \nabla w\right)\df x\df t\\ 
				&& +s\iint_Q\Div(|x|^\al \nabla w)\left(\vp_{tt}-\Div (|x|^\al \nabla \vp)\right)w\df x\df t\\ 
				&&-2s^3\iint_Q(\vp_t^2w-|x|^\al w\nabla\vp\cdot \nabla\vp)\left(\vp_t w_t-|x|^\al \nabla\vp\cdot \nabla w\right)\df x\df t\\ 
				&& -s^3\iint_Q\left(\vp_t^2-|x|^\al \nabla\vp\cdot \nabla\vp\right) \left(\vp_{tt}-\Div (|x|^\al \nabla\vp)\right)w^2 \df x\df t\\ 
				&&=:\sum_{i=1}^6 I_i.
			\end{eqnarray*}

			Next, we compute  $I_1$. Given that  $w_t(0)=w_t(T)=0$ and $w|_{\Sigma}=0$, we derive
			\begin{eqnarray*}
				I_1
				&&=-2s\iint_Q w_{tt}(\vp_tw_t-|x|^\al \nabla\vp\cdot \nabla w)\df x\df t \\
				&&=-s\iint_Q \vp_t \pt_t w_t^2\df x\df t+2s\iint_Q |x|^\al \nabla\vp\cdot \nabla w w_{tt}\df x\df t\\
				&&=-s\iint_Q\pt_t\left(\vp_t w_t^2\right)\df x\df t+s\iint_Q \vp_{tt}w_t^2\df x\df t\\
				&& +2s\iint_Q\pt_t\left(|x|^\al \nabla\vp\cdot \nabla w w_t\right)\df x\df t-2s\iint_Q|x|^\al\nabla\vp_t \cdot \nabla w w_t\df x\df t-2s\iint_Q |x|^\al\nabla \vp\cdot \nabla w_t w_t\df x\df t\\
				&&=s\iint_Q \vp_{tt}w_t^2\df x\df t-2s\iint_Q|x|^\al(\nabla\vp_t \cdot \nabla w) w_t\df x\df t\\
				&& -s\iint_Q \Div\left(|x|^\al\nabla \vp    w_t^2\right) \df x\df t+s\iint_Q \Div\left(|x|^\al \nabla \vp\right) w_t^2\df x\df t\\
				&&=s\iint_Q \vp_{tt}w_t^2\df x\df t-2s\iint_Q|x|^\al(\nabla\vp_t \cdot \nabla w) w_t\df x\df t+s\iint_Q \Div\left(|x|^\al \nabla \vp\right) w_t^2\df x\df t\\
				&&=s\iint_Q\left(\vp_{tt}+\Div \left(|x|^\al \nabla\vp\right)\right) w_t^2\df x\df t-2s\iint_Q|x|^\al(\nabla\vp_t \cdot \nabla w) w_t\df x\df t.
			\end{eqnarray*}
			Now, let's compute $I_2$.  Given that $w(0)=w(T)=0$, we obtain
			\begin{eqnarray*}
				I_2
				&&=-s\iint_Q(\vp_{tt}-\Div (|x|^\al \nabla\vp))w_{tt}w\df x\df t\\
				&&=-s\iint_Q\pt_t\Big( \left(\vp_{tt}-\Div (|x|^\al \nabla\vp)\right)w_{t}w\Big)\df x\df t\\
				&& +s\iint_Q\left(\vp_{tt}-\Div (|x|^\al \nabla \vp)\right)w_t^2\df x\df t+\f{s}{2}\iint_Q \pt_t w^2\left(\pt_t^2\vp_t-\Div(|x|^\al \nabla \vp_t)\right)\df x\df t\\
				&&=s\iint_Q\left(\vp_{tt}-\Div (|x|^\al \nabla \vp)\right)w_t^2\df x\df t\\
				&&\hspace{4.5mm}+\f{s}{2}\iint_Q\pt_t\Big(w^2(\pt_t^2\vp_t-\Div(|x|^\al \nabla \vp_t))\Big)\df x\df t-\f{s}{2}\iint_Q w^2\left(\pt_{tt}^2\vp-\Div(|x|^\al \nabla \vp_{tt})\right)\df x\df t\\
				&&=s\iint_Q\left(\vp_{tt}-\Div (|x|^\al \nabla \vp)\right)w_t^2\df x\df t-\f{s}{2}\iint_Q w^2\left(\pt_{tt}^2\vp-\Div(|x|^\al \nabla \vp_{tt})\right)\df x\df t,
			\end{eqnarray*}
			where we have used the fact that the integral of a total derivative over a time interval $[0,T]$ with zero initial and final conditions vanishes.

			Next, we proceed to compute  $I_3$. Given the condition  $w|_{\Sigma}=0$,  we derive the following:
			\begin{eqnarray*}
				I_3
				&&=2s\iint_Q \Div(|x|^\al \nabla w) \left(\vp_tw_t-|x|^\al \nabla\vp\cdot \nabla w\right)\df x\df t\\
				&&=2s\iint_Q\Div\Big(|x|^\al \nabla w(\vp_tw_t-|x|^\al \nabla\vp\cdot \nabla w)\Big)\df x\df t-2s\iint_Q|x|^\al \nabla w\cdot \nabla (\vp_tw_t-|x|^\al \nabla\vp\cdot \nabla w)\df x\df t\\
				&&=-2s\iint_\Sigma |x|^{2\al}(\nabla w\cdot \nu)(\nabla \vp\cdot \nabla w)\df S\df t\\
				&&\hspace{4.5mm}-2s\iint_Q|x|^\al (\nabla w\cdot \nabla \vp_t)w_t\df x\df t-2s\iint_Q|x|^\al \vp_t\nabla w\cdot \nabla w_t\df x\df t\\
				&&\hspace{4.5mm}+2s\iint_Q|x|^{2\al-2}(\nabla w\cdot x)(\nabla \vp\cdot \nabla w)\df x\df t+2s\iint_Q|x|^{2\al}\nabla w\cdot \nabla(\nabla \vp\cdot \nabla w)\df x\df t\\
				&&=-s\iint_\Sigma  |x|^{2\al}(\nabla \vp\cdot\nu) |\pt_\nu w|^2 \df S\df t+2s\iint_Q|x|^{2\al-2}(\nabla w\cdot x)(\nabla \vp\cdot \nabla w)\df x\df t\\
				&&\hspace{4.5mm}-2s\iint_Q|x|^\al (\nabla w\cdot \nabla \vp_t)w_t\df x\df t+s\iint_Q|x|^\al \vp_{tt}(\nabla w\cdot \nabla w)\df x\df t\\
				&&\hspace{4.5mm}+2s\iint_Q|x|^{2\al}D^2\vp\nabla w\cdot \nabla w\df x\df t-s\iint_Q\Div(|x|^{2\al}\nabla\vp)|\nabla w|^2\df x\df t
			\end{eqnarray*}
			where we have used the identities:
			\begin{equation*}
				\begin{split}
					&-2s\iint_Q|x|^\al \vp_t\nabla w\cdot \nabla\vp_t\df x\df t\\
					&=-s\iint_Q|x|^\al\vp_t\pt_t\left(\nabla w\cdot \nabla w\right)\df x\df t\\
					&=-s\iint_Q\pt_t\left(|x|^\al \vp_t (\nabla w\cdot \nabla w)\right)\df x\df t+s\iint_Q|x|^\al \vp_{tt}(\nabla w\cdot \nabla w)\df x\df t\\
					&=s\iint_Q|x|^\al \vp_{tt}(\nabla w\cdot \nabla w)\df x\df t,
				\end{split}
			\end{equation*}
			and
			\begin{eqnarray*}
				&&2s\iint_Q|x|^{2\al}\nabla w\cdot \nabla(\nabla \vp\cdot \nabla w)\df x\df t\\
				&&=2s\iint_Q|x|^{2\al}D^2\vp\nabla w\cdot \nabla w\df x\df t+s\iint_Q|x|^{2\al}\nabla \vp\cdot \nabla (|\nabla w|^2)\df x\df t\\
				&&=2s\iint_Q|x|^{2\al}D^2\vp\nabla w\cdot \nabla w\df x\df t\\
				&&\hspace{4.5mm}+s\iint_Q\Div\left( |x|^{2\al}\nabla \vp |\nabla w|^2\right)\df x\df t-s\iint_Q\Div(|x|^{2\al}\nabla\vp)|\nabla w|^2\df x\df t\\
				&&=2s\iint_Q|x|^{2\al}D^2\vp\nabla w\cdot \nabla w\df x\df t\\
				&&\hspace{4.5mm}+s\iint_\Sigma  |x|^{2\al}(\nabla \vp\cdot\nu) |\pt_\nu w|^2 \df S\df t-s\iint_Q\Div(|x|^{2\al}\nabla\vp)|\nabla w|^2\df x\df t\\
			\end{eqnarray*}
			with $D^2\vp=(\pt_{x_ix_j}^2\vp)_{i,j\in J_N}$ and
			\begin{equation*}
				\nabla w\cdot \nabla (\nabla w\cdot \nabla \vp)=D^2\vp\nabla w\cdot \nabla w+\f{1}{2}\nabla \vp\cdot \nabla (|\nabla z|^2).
			\end{equation*}

			Now, let us compute  $I_4$. Given that  $w|_{\Sigma}=0$, we obtain
			\begin{eqnarray*}
				I_4
				&&=s\iint_Q\Div(|x|^\al \nabla w)\left(\vp_{tt}-\Div (|x|^\al \nabla \vp)\right)w\df x\df t\\
				&&=s\iint_Q\Div\Big(|x|^\al \nabla w\left(\vp_{tt}-\Div (|x|^\al \nabla \vp)\right)w\Big)\df x\df t-s\iint_Q|x|^\al \nabla w\cdot \nabla \Big( \left(\vp_{tt}-\Div (|x|^\al \nabla \vp)\right)w\Big)\df x\df t\\
				&&=-s\iint_Q|x|^\al \nabla w\cdot \nabla w(\vp_{tt}-\Div(|x|^\al \nabla \vp))\df x\df t-\f{s}{2}\iint_Q|x|^\al  \nabla (\vp_{tt}-\Div (|x|^\al \nabla\vp))\cdot \nabla w^2\df x\df t\\
				&&=-s\iint_Q|x|^\al \nabla w\cdot \nabla w(\vp_{tt}-\Div(|x|^\al \nabla \vp))\df x\df t\\
				&&\hspace{4.5mm}-\f{s}{2}\iint_Q\Div\Big(  \nabla (\vp_{tt}-\Div (|x|^\al \nabla\vp))w^2\Big)\df x\df t+\f{s}{2}\iint_Q\Div\Big(|x|^\al \nabla  \left(\vp_{tt}-\Div (|x|^\al \nabla \vp)\right) \Big) w^2\df x\df t\\
				&&=-s\iint_Q|x|^\al \nabla w\cdot \nabla w(\vp_{tt}-\Div(|x|^\al \nabla \vp))\df x\df t+\f{s}{2}\iint_Q\Div\Big(|x|^\al \nabla  \left(\vp_{tt}-\Div (|x|^\al \nabla \vp)\right) \Big) w^2\df x\df t.
			\end{eqnarray*}

			Now, we proceed to compute $I_5$. Given  $w(0)=w(T)=0$ and $ w|_{\Sigma}=0$, we derive:
			\begin{eqnarray*}
				I_5
				&&=-2s^3\iint_Q(\vp_t^2w-|x|^\al w\nabla\vp\cdot \nabla\vp)\left(\vp_t w_t-|x|^\al \nabla\vp\cdot \nabla w\right)\df x\df t\\
				&&=-2s^3\iint_Q\vp_t^3w_tw\df x\df t+2s^3\iint_Q|x|^\al \vp_t^2\nabla \vp\cdot \nabla w w\df x\df t\\
				&&\hspace{4.5mm}+2s^3\iint_Q |x|^\al \vp_t(\nabla \vp\cdot \nabla \vp)w_t w\df x\df t-2s^3\iint_Q |x|^{2\al}(\nabla\vp\cdot \nabla\vp)(\nabla \vp\cdot \nabla w)w\df x\df t \\
				&&=-s^3\iint_Q\vp_t^3\pt_tw^2\df x\df t+s^3\iint_Q|x|^\al\vp_t^2\nabla \vp\cdot \nabla w^2\df x\df t\\
				&&\hspace{4.5mm}+s^3\iint_Q|x|^\al \vp_t(\nabla\vp\cdot \nabla\vp)\pt_tw^2\df x\df t-s^3\iint_Q|x|^{2\al}(\nabla\vp\cdot \nabla\vp)\nabla\vp\cdot \nabla w^2\df x\df t\\
				&&=-s^3\iint_Q\pt_t\left(\vp_t^3w^2\right)\df x\df t+3s^3\iint_Q \vp_t^2\vp_{tt}w^2\df x\df t\\
				&&\hspace{4.5mm}+s^3\iint_Q\Div\Big(|x|^\al \vp_t^2\nabla \vp w\Big)\df x\df t-s^3\iint_Q\Div(|x|^\al \vp_t^2\nabla \vp)w^2\df x\df t\\
				&&\hspace{4.5mm}+s^3\iint_Q\pt_t\Big(|x|^\al\vp_t(\nabla\vp\cdot \nabla \vp) w\Big)\df x\df t-s^3\iint_Q\pt_t(|x|^\al \vp_t(\nabla \vp\cdot \nabla \vp))w^2\df x\df t\\
				&&\hspace{4.5mm}-s^3\iint_Q\Div\Big(|x|^{2\al}(\nabla\vp\cdot \nabla\vp)\nabla\vp w^2\Big)\df x\df t+s^3\iint_Q\Div\left(|x|^{2\al}(\nabla\vp\cdot \nabla\vp)\nabla\vp\right)w^2\df x\df t\\
				&&7=3s^3\iint_Q \vp_t^2\vp_{tt}w^2\df x\df t-s^3\iint_Q\Div(|x|^\al \vp_t^2\nabla \vp)w^2\df x\df t\\
				&&\hspace{4.5mm}-s^3\iint_Q\pt_t(|x|^\al \vp_t(\nabla \vp\cdot \nabla \vp))w^2\df x\df t+s^3\iint_Q\Div\left(|x|^{2\al}(\nabla\vp\cdot \nabla\vp)\nabla\vp\right)w^2\df x\df t.
			\end{eqnarray*}
			
			Next, we compute $I_6$:
			\begin{equation*}
				\begin{split}
					I_6=&-s^3\iint_Q\left(\vp_t^2-|x|^\al \nabla\vp\cdot \nabla\vp\right) \left(\vp_{tt}-\Div (|x|^\al \nabla\vp)\right)w^2 \df x\df t.
				\end{split}
			\end{equation*}
			
			Combining  $I_1$ to $I_6$, we obtain
			\begin{eqnarray*}
				&&(P^+w,P^-w)\\
				&&=s\iint_Q\left(\vp_{tt}+\Div \left(|x|^\al \nabla\vp\right)\right) w_t^2\df x\df t-2s\iint_Q|x|^\al(\nabla\vp_t \cdot \nabla w) w_t\df x\df t\\ 
				&&\hspace{4.5mm}+s\iint_Q\left(\vp_{tt}-\Div (|x|^\al \nabla \vp)\right)w_t^2\df x\df t-\f{s}{2}\iint_Q w^2\left(\pt_{tt}^2\vp -\Div(|x|^\al \nabla \vp_{tt})\right)\df x\df t\\ 
				&&\hspace{4.5mm}-s\iint_\Sigma  |x|^{2\al}(\nabla \vp\cdot\nu) |\pt_\nu w|^2 \df S\df t+2s\iint_Q|x|^{2\al-2}(\nabla w\cdot x)(\nabla \vp\cdot \nabla w)\df x\df t\\
				&&\hspace{4.5mm}-2s\iint_Q|x|^\al (\nabla w\cdot \nabla \vp_t)w_t\df x\df t+s\iint_Q|x|^\al \vp_{tt}(\nabla w\cdot \nabla w)\df x\df t\\
				&&\hspace{4.5mm}+2s\iint_Q|x|^{2\al}D^2\vp\nabla w\cdot \nabla w\df x\df t-s\iint_Q\Div(|x|^{2\al}\nabla\vp)|\nabla w|^2\df x\df t\\ 
				&&\hspace{4.5mm} -s\iint_Q|x|^\al \nabla w\cdot \nabla w(\vp_{tt}-\Div(|x|^\al \nabla \vp))\df x\df t+\f{s}{2}\iint_Q\Div\Big(|x|^\al \nabla  \left(\vp_{tt}-\Div (|x|^\al \nabla \vp)\right) \Big) w^2\df x\df t \\ 
				&&\hspace{4.5mm} +3s^3\iint_Q \vp_t^2\vp_{tt}w^2\df x\df t-s^3\iint_Q\Div(|x|^\al \vp_t^2\nabla \vp)w^2\df x\df t\\
				&&\hspace{4.5mm}-s^3\iint_Q\pt_t(|x|^\al \vp_t(\nabla \vp\cdot \nabla \vp))w^2\df x\df t+s^3\iint_Q\Div\left(|x|^{2\al}(\nabla\vp\cdot \nabla\vp)\nabla\vp\right)w^2\df x\df t\\ 
				&&\hspace{4.5mm} -s^3\iint_Q\left(\vp_t^2-|x|^\al \nabla\vp\cdot \nabla\vp\right) \left(\vp_{tt}-\Div (|x|^\al \nabla\vp)\right)w^2 \df x\df t\\
				&&=\sum_{i=1}^{17} H_i.
			\end{eqnarray*}
			
			Now, we estimate $(P^+w,P^-w)$. The fifth term $H_5$  does not need estimation:
			\begin{equation*}
				-s\iint_\Sigma  |x|^{2\al}(\nabla \vp\cdot\nu) |\pt_\nu w|^2 \df S\df t =-2s\ga \iint_\Sigma |x|^{2\al}  \vp (x\cdot \nu)|\pt_\nu w|^2\df S\df t.
			\end{equation*}
			
			Next, we estimate the sixth term $H_6$:
			\begin{eqnarray*}
				&&2s\iint_Q|x|^{2\al-2}(\nabla w\cdot x)(\nabla \vp\cdot \nabla w)\df x\df t\\
				&&=2s\iint_Q |x|^{2\al-2}(\nabla w\cdot x) (2\ga \vp x\cdot \nabla w)\df x\df t\\
				&&=4s\ga\iint_Q|x|^{2\al-2}\vp (x\cdot \nabla w)^2\df x\df t.
			\end{eqnarray*}
			
			Now, we estimate the tenth term $H_{9}$:
			\begin{eqnarray*}
				H_{9}
				&&=2s\iint_Q|x|^{2\al}D^2\vp\nabla w\cdot \nabla w\df x\df t\\
				&&=2s\iint_Q|x|^{2\al} \vp\left(2\ga \de_{ij}+4\ga^2x_ix_j\right) \nabla w\cdot \nabla w\df x\df t\\
				&&=2s \alpha \ga \iint_Q|x|^{2\al}\vp |\nabla w|^2\df x\df t+8s\ga^2\iint_Q|x|^{2\al}(x\cdot \nabla w)^2\df x\df t.
			\end{eqnarray*}

			Next, we estimate the first term $H_1$:
			\begin{eqnarray*}
				H_1
				&&=s\iint_Q\left(\vp_{tt}+\Div \left(|x|^\al \nabla\vp\right)\right) w_t^2\df x\df t\\
				&&=s\iint_Q\vp(-2\be\ga+\ga^2\psi_t^2)w_t^2\df x\df t+s\iint_Q\Div(|x|^\al \nabla \vp)w_t^2\df x\df t\\
				&&\geq-2s\be\ga\iint_Q\vp w_t^2\df x\df t+s\ga^2\iint_Q \psi_t^2\vp w_t^2\df x\df t+2\ga s\iint_Q\vp\left(N+1+2\ga |x|^2\right)|x|^\al w_t^2\df x\df t.
			\end{eqnarray*}

			Now, we estimate the second term $H_2$:
			\begin{eqnarray*}
				H_2
				&&=-2s\iint_Q|x|^\al(\nabla\vp_t \cdot \nabla w) w_t\df x\df t\\
				&&=-2s\ga^2\iint_Q |x|^\al \vp  \psi_t\nabla \psi \cdot \nabla w w_t\df x\df t=-2s\ga^2\iint_Q( |x|^\al \vp^\f{1}{2}x\cdot \nabla w)(\vp^\f{1}{2}\psi_tw_t)\df x\df t \\
				&&\geq -s\ga^2\iint_Q|x|^{2\al}\vp(x\cdot \nabla w)^2\df x\df t-s\ga^2 \iint_Q\psi_t^2 \vp w_t^2\df x\df t.
			\end{eqnarray*}

			Now, we estimate $H_3$:
			\begin{eqnarray*}
				H_3
				&& =s\iint_Q\left(\vp_{tt}-\Div (|x|^\al \nabla \vp)\right)w_t^2\df x\df t\\
				&&\geq -2s\be\ga \iint_Q\vp w_t^2\df x\df t+s\ga^2\iint_Q \psi_t^2\vp w_t^2\df x\df t-2\ga s\iint_Q\vp\left(N+1+2\ga |x|^2\right)|x|^\al w_t^2\df x\df t.
			\end{eqnarray*}

			Now, we estimate $H_7$:
			\begin{equation*}
				H_7
				=-2s\iint_Q|x|^\al (\nabla w\cdot \nabla \vp_t)w_t\df x\df t=H_2.
			\end{equation*}
			
			Next, we estimate  $H_8+H_{10}+H_{11}$:
			\begin{eqnarray*}
				&&H_8+H_{10}+H_{11}\\
				&&=s\iint_Q|x|^\al \vp_{tt}(\nabla w\cdot \nabla w)\df x\df t -s\iint_Q\Div(|x|^{2\al}\nabla\vp)|\nabla w|^2\df x\df t\\
				&&\hspace{4.5mm}-s\iint_Q|x|^\al \nabla w\cdot \nabla w(\vp_{tt}-\Div(|x|^\al \nabla \vp))\df x\df t\\
				&&=-s\iint_Q \nabla (|x|^\al) |x|^\al \nabla \vp|\nabla w|^2\df x\df t=-2s \alpha\ga\iint_Q|x|^{2\al}\vp|\nabla w|^2\df x\df t.
			\end{eqnarray*}

			Now, we obtain that
			\begin{equation*}
				\begin{split}
					& H_1+H_2+H_3+H_5+H_6+H_7+H_8+H_9+H_{10}+H_{11}\\
					&\geq 2(2-\alpha)s\ga \iint_Q|x|^{2\al}\vp |\nabla w|^2\df x\df t+6s\ga^2\iint_Q|x|^{2\al}(x\cdot \nabla w)^2\df x\df t\\
					&\hspace{4.5mm}+4s\ga\iint_Q|x|^{2\al-2}\vp (x\cdot \nabla w)^2\df x\df t \\
					&\hspace{4.5mm}-2s\ga \iint_\Sigma |x|^{2\al}  \vp (x\cdot \nu)|\pt_\nu w|^2\df S\df t -4s\be\ga\iint_Q\vp w_t^2\df x\df t.
				\end{split}
			\end{equation*}

			Now, we compute
			\begin{eqnarray*}
				&&(P^+w, s\ga \vp w)\\
				&& =\Big(\left(w_{tt}-\Div\left(|x|^\al \nabla w\right)\right)+s^2\left(\vp_t^2-|x|^\al \nabla\vp\cdot \nabla\vp\right)w, s\ga \vp w\Big)\\
				&&=s\ga \iint_Q \vp w_{tt} w\df x\df t-s\ga \iint_Q \vp \Div(|x|^\al \nabla w)w\df x\df t\\
				&&\hspace{4.5mm} +s^3\ga \iint_Q \vp_t^2\vp w^2\df x\df t-s^3\ga \iint_Q |x|^\al \vp (\nabla\vp\cdot \nabla\vp) w^2\df x\df t\\
				&&=\sum_{i=1}^4L_i.
			\end{eqnarray*}

			Now, we compute $L_1$. Since $w(0)=w(T)=0$, we obtain
			\begin{eqnarray*}
				L_1
				&&=s\ga \iint_Q \vp w_{tt} w\df x\df t\\
				&&=s\ga\iint_Q \pt_t(\vp w_tw)\df x\df t-s\ga\iint_Q \vp w_t^2\df x\df t-s\ga \iint_Q\vp_tw_tw\df x\df t\\
				&&=-s\ga\iint_Q\vp w_t^2\df x\df t -\f{s\ga }{2}\iint_Q\vp_t\pt_tw^2\df x\df t\\
				&&=-s\ga\iint_Q\vp w_t^2\df x\df t-\f{s\ga }{2}\iint_Q \pt_t(\vp_tw^2)\df x\df t+\f{s\ga }{2}\iint_Q \vp_{tt}w^2\df x\df t\\
				&&=-s\ga\iint_Q\vp w_t^2\df x\df t+\f{s\ga }{2}\iint_Q \vp_{tt}w^2\df x\df t.
			\end{eqnarray*}

			Now, we compute $L_2$:
			\begin{equation*}
				\begin{split}
					L_2
					&=-s\ga \iint_Q \vp \Div(|x|^\al \nabla w)w\df x\df t\\
					&=-s\ga \iint_Q\Div\left(|x|^\al \vp \nabla w w\right)\df x\df t+s\ga\iint_Q|x|^\al \nabla\vp\cdot \nabla w w\df x\df t+s\ga\iint_Q|x|^\al \vp \nabla w\cdot \nabla w\df x\df t\\
					&=\f{s\ga}{2}\iint_Q|x|^\al \nabla \vp\cdot \nabla w^2\df x\df t+s\ga \iint_Q|x|^\al \vp\nabla w\cdot \nabla w\df x\df t\\
					&=\f{s\ga}{2}\iint_Q\Div\left(|x|^\al \nabla \vp w^2\right)\df x\df t-\f{s\ga}{2}\iint_Q\Div\left(|x|^\al \nabla \vp\right)w^2\df x\df t+s\ga\iint_Q|x|^\al \vp \nabla w\cdot \nabla w\df x\df t\\
					&=-\f{s\ga}{2}\iint_Q\Div\left(|x|^\al \nabla \vp\right)w^2\df x\df t+s\ga\iint_Q|x|^\al \vp \nabla w\cdot \nabla w\df x\df t.
				\end{split}
			\end{equation*}
			
			Now, we compute $L_3+L_4$:
			\begin{eqnarray*}
				&&L_3+L_4\\
				&&=s^3\ga \iint_Q \vp_t^2\vp w^2\df x\df t-s^3\ga^3 \iint_Q |x|^\al \vp (\nabla\vp\cdot \nabla\vp) w^2\df x\df t=s^3\ga^3\iint_Q \vp^3(\psi_t^2-4|x|^{\al+2})w^2\df x\df t.
			\end{eqnarray*}
			Hence,
			\begin{eqnarray*}
				&&(P^+w, s\ga \vp w)\\
				&&=-s\ga\iint_Q\vp w_t^2\df x\df t+\f{s\ga }{2}\iint_Q \Big(\vp_{tt}-\Div\left(|x|^\al \nabla \vp\right)\Big)w^2\df x\df t\\
				&&\hspace{4.5mm} +s\ga\iint_Q|x|^\al \vp \nabla w\cdot \nabla w\df x\df t +s^3\ga^3\iint_Q \vp^3(\psi_t^2-4|x|^{\al+2})w^2\df x\df t,
			\end{eqnarray*}
			which implies that
			\begin{eqnarray*}
				s\ga \iint_Q\vp w_t^2\df x\df t
				&&\leq s^3\ga^3\iint_Q\vp^3\left|\psi_t^2-4|x|^{\al+2}\right| w^2\df x\df t+\f{s\ga}{\e^{\al}}\iint_Q |x|^{2\al}\vp\nabla w\cdot \nabla w\df x\df t\\
				&&\hspace{4.5mm}+\eta\|P^+w\|^2+\f{s^2\ga^3}{2\eta}\iint_Q  \vp^2w^2\df x\df t,
			\end{eqnarray*}
			i.e.,
			\begin{eqnarray*}
				&& s\ga \iint_Q|x|^{2\al}\vp|\nabla w|^2\df x\df t\\
				&&\geq s\ga \e^\al \iint_Q\vp w_t^2\df x\df t-\e^\al\left(s^3\ga^3\iint_Q\vp^3\left|\psi_t^2-4|x|^{\al+2}\right|w^2\df x\df t-\eta\|P^+z\|^2+\f{s^2\ga^2}{2\eta}\iint_Q\vp^2w^2\df x\df t\right).
			\end{eqnarray*}
			Then we get
			\begin{equation*}
				\begin{split}
					& H_1+H_2+H_3+H_5+H_6+H_7+H_8+H_9+H_{10}+H_{11}\\
					&\geq (2-\alpha)s\ga \iint_Q|x|^{2\al}\vp |\nabla w|^2\df x\df t+6s\ga^2\iint_Q|x|^{2\al}(x\cdot \nabla w)^2\df x\df t\\
					&\hspace{4.5mm}+4s\ga\iint_Q|x|^{2\al-2}\vp (x\cdot \nabla w)^2\df x\df t +s\ga \be\iint_Q\vp w_t^2\df x\df t\\
					&\hspace{4.5mm}-2s\ga \iint_\Sigma |x|^{2\al}  \vp (x\cdot \nu)|\pt_\nu w|^2\df S\df t\\
					&\hspace{4.5mm}-(2-\alpha)\e^\al\left(s^3\ga^3\iint_Q\vp^3\left|\psi_t^2-4|x|^{\al+2}\right|w^2\df x\df t-\eta\|P^+z\|^2+\f{s^2\ga^3}{2\eta}\iint_Q\vp^2w^2\df x\df t\right).
				\end{split}
			\end{equation*}
			by choosing  $0<\be<\f{2-\alpha}{5}\e^\al$.

			Now, we estimate $H_4+H_{12}$.  It is straightforward to derive that
			\begin{eqnarray*}
				&& -\f{s}{2}\iint_Q w^2\left(\pt_{tt}^2\vp -\Div(|x|^\al \nabla \vp_{tt})\right)\df x\df t+\f{s}{2}\iint_Q\Div\Big(|x|^\al \nabla  \left(\vp_{tt}-\Div (|x|^\al \nabla \vp)\right) \Big) w^2\df x\df t\\
				&&\geq -\f{s\ga^4}{2}\iint_Q(\psi_t^2-4|x|^{\al+2})^2\vp w^2\df x\df t-Cs\ga^3T^3\iint_Q \vp w^2\df x\df t.
			\end{eqnarray*}

			Next, we estimate  $H_{13}+H_{14}+H_{15}+H_{16}+H_{17}$:
			\begin{eqnarray*}
				&& H_{13}+H_{14}+H_{15}+H_{16}+H_{17}\\
				&& =+3s^3\iint_Q \vp_t^2\vp_{tt}w^2\df x\df t-s^3\iint_Q\Div(|x|^\al \vp_t^2\nabla \vp)w^2\df x\df t\\
				&&\hspace{4.5mm}-s^3\iint_Q\pt_t(|x|^\al \vp_t(\nabla \vp\cdot \nabla \vp))w^2\df x\df t+s^3\iint_Q\Div\left(|x|^{2\al}(\nabla\vp\cdot \nabla\vp)\nabla\vp\right)w^2\df x\df t\\
				&&\hspace{4.5mm} -s^3\iint_Q\left(\vp_t^2-|x|^\al \nabla\vp\cdot \nabla\vp\right) \left(\vp_{tt}-\Div (|x|^\al \nabla\vp)\right)w^2 \df x\df t\\
				&&=2s^3\iint_Q\vp_t^2\vp_{tt}w^2\df x\df t-4s^3\iint_Q|x|^\al \vp_t\nabla\vp_t\cdot \nabla \vp w^2\df x\df t\\
				&&\hspace{4.5mm}+s^3\iint_Q |x|^\al \nabla \vp\cdot \nabla \left(|x|^\al \nabla \vp\cdot \nabla \vp\right)w^2\df x\df t\\
				&&=2s^3\ga^3\iint_Q\vp^3\psi_t^2\psi_{tt}w^2\df x\df t+2s^3\ga^4\iint_Q\vp^3\psi_t^4w^2\df x\df t-16s^3\ga^4\iint_Q|x|^{\al+2}\psi_t^2\vp^3w^2\df x\df t\\
				&&\hspace{4.5mm}+8(\al+2)s^3\ga^3\iint_Q|x|^{2\al+2}\vp^3w^2\df x\df t+32s^3\ga^4\iint_Q|x|^{2\al+4}\vp^3w^2\df x\df t\\
				&&=-4s^3\ga^3\be\iint_Q (\psi_t^2-4|x|^{\al+2})\vp^3w^2\df x\df t+2s^3\ga^4\iint_Q\left(\psi_t^2-4|x|^{\al+2}\right)^2\vp^3w^2\df x\df t\\
				&&\hspace{4.5mm}+8(\al+2)s^3\ga^3\iint_Q|x|^{2\al+2}\vp^3w^2\df x\df t-16s^3\ga^3\be\iint_Q |x|^{\al+2}\vp^3w^2\df x\df t.
			\end{eqnarray*}

			Overall, we obtain
			\begin{eqnarray*}
				&&(P^+w,P^-w)\\
				&&\geq (2-\alpha)s\ga \iint_Q|x|^{2\al}\vp |\nabla w|^2\df x\df t+6s\ga^2\iint_Q|x|^{2\al}(x\cdot \nabla w)^2\df x\df t\\
				&&\hspace{4.5mm}+4s\ga\iint_Q|x|^{2\al-2}\vp (x\cdot \nabla w)^2\df x\df t +s\ga \be\iint_Q\vp w_t^2\df x\df t\\
				&&\hspace{4.5mm}-2s\ga \iint_\Sigma |x|^{2\al}  \vp (x\cdot \nu)|\pt_\nu w|^2\df S\df t\\
				&&\hspace{4.5mm}-(2-\alpha)\e^\al\left(s^3\ga^3\iint_Q\vp^3\left|\psi_t^2-4|x|^{\al+2}\right|w^2\df x\df t-\eta\|P^+z\|^2+\f{s^2\ga^3}{2\eta}\iint_Q\vp^2w^2\df x\df t\right)\\
				&&\hspace{4.5mm} -\f{s\ga^4}{2}\iint_Q(\psi_t^2-4|x|^{\al+2})^2\vp w^2\df x\df t-\f{Cs\ga^3T^3}{\e^{2\al+2}}\iint_Q |x|^{2\al+2} \vp w^2\df x\df t\\
				&&\hspace{4.5mm}-4s^3\ga^3\be\iint_Q \left|\psi_t^2-4|x|^{\al+2}\right|\vp^3w^2\df x\df t+2s^3\ga^4\iint_Q\left(\psi_t^2-4|x|^{\al+2}\right)^2\vp^3w^2\df x\df t\\
				&&\hspace{4.5mm}+8(\al+2)s^3\ga^3\iint_Q|x|^{2\al+2}\vp^3w^2\df x\df t-16s^3\ga^3\be\iint_Q |x|^{\al+2}\vp^3w^2\df x\df t.
				3	
			\end{eqnarray*}
			Now, choose $\eta=\f{1}{4\e^\al}$. Note that
			\begin{eqnarray*}
				&&2s^3\ga^4\iint_Q(\psi_t^2-4|x|^{\al+2})^2\vp^3w^2\df x\df t-(\e^\al +4\be)s^3\ga^3\iint_Q\vp^3\left|\psi_t^2-4|x|^{\al+2}\right| w^2\df x\df t\\
				&&=2s^3\ga^4\iint_Q(\psi_t^2-4|x|^{\al+2})^2\vp^3w^2\df x\df t- \f{(\e^\al+4\be)^2}{4\e^{2\al+2}}s^3\ga^3\iint_Q\left(\psi_t^2-4|x|^{\al+2}\right)^2\vp^3w^2\df x\df t\\
				&&\hspace{4.5mm} +\iint_Q\left[\f{\e^\al+4\be}{2\e^{\al+1}}\left|\psi_t^2-4|x|^{\al+2}\right|-\e^{\al+1}\right]^2\vp^3w^2\df x\df t-\e^{2\al+2}s^3\ga^3\iint_Q\vp^3w^2\df x\df t\\
				&&\geq 2s^3\ga^4\iint_Q(\psi_t^2-4|x|^{\al+2})^2\vp^3w^2\df x\df t- \f{(\e^\al+4\be)^2}{4\e^{2\al+2}}s^3\ga^3\iint_Q\left(\psi_t^2-4|x|^{\al+2}\right)^2\vp^3w^2\df x\df t\\
				&&\hspace{4.5mm} +\iint_Q\left[\f{\e^\al+4\be}{2\e^{\al+1}}\left|\psi_t^2-4|x|^{\al+2}\right|-\e^{\al+1}\right]^2\vp^3w^2\df x\df t- s^3\ga^3\iint_Q|x|^{2\al+2}\vp^3w^2\df x\df t.
			\end{eqnarray*}
			Select $\ga^*=\ga_*(\e)>1, s_*(\ga)>1$   sufficiently large, then for all $\ga \geq \ga_*, s\geq s_*(\ga)$, we obtain
			\begin{eqnarray*}
				&&(P^+w,P^-w)\\
				&&\geq (2-\alpha)s\ga \iint_Q|x|^{2\al}\vp |\nabla w|^2\df x\df t+6s\ga^2\iint_Q|x|^{2\al}(x\cdot \nabla w)^2\df x\df t\\
				&&\hspace{4.5mm}+4s\ga\iint_Q|x|^{2\al-2}\vp (x\cdot \nabla w)^2\df x\df t +s\ga \be\iint_Q\vp w_t^2\df x\df t \\
				&&\hspace{4.5mm}-2s\ga \iint_\Sigma |x|^{2\al}  \vp (x\cdot \nu)|\pt_\nu w|^2\df S\df t-\f{1}{4}\|P^+z\|^2\\
				&&\hspace{4.5mm}+s^3\ga^4\iint_Q\left(\psi_t^2-4|x|^{\al+2}\right)^2\vp^3w^2\df x\df t +4(\al+2)s^3\ga^3\iint_Q|x|^{2\al+2}\vp^3w^2\df x\df t.
			\end{eqnarray*}
			These imply that
			\begin{eqnarray*}
				&&2(2-\alpha)s\ga \iint_Q|x|^{2\al}\vp |\nabla w|^2\df x\df t+12s\ga^2\iint_Q|x|^{2\al}(x\cdot \nabla w)^2\df x\df t\\
				&&+8s\ga\iint_Q|x|^{2\al-2}\vp (x\cdot \nabla w)^2\df x\df t +2s\ga \be\iint_Q\vp w_t^2\df x\df t \\
				&&+2s^3\ga^4\iint_Q\left(\psi_t^2-4|x|^{\al+2}\right)^2\vp^3w^2\df x\df t +8(\al+2)s^3\ga^3\iint_Q|x|^{2\al+2}\vp^3w^2\df x\df t\\
				&&\leq 4s\ga \iint_{\Sigma} |x|^{2\al}  \vp (x\cdot \nu)|\pt_\nu w|^2\df S\df t +\|e^{s\vp}F\|^2\\
				&&\leq 4s\ga \iint_{\Sigma_+} |x|^{2\al}  \vp (x\cdot \nu)|\pt_\nu w|^2\df S\df t +\|e^{s\vp}F\|^2.
			\end{eqnarray*}
			Hence, we have the estimate
			\begin{equation}\label{A-2}
				\begin{split}
					&\iint_Qs\ga \vp \left(|x|^\al \nabla w\cdot \nabla w+|w_t|^2+s^2\ga^2\vp^2|w|^2\right)\df x\df t\\
					&\leq C \iint_Qe^{2s\vp}F^2\df x\df t+C\iint_{\Sigma_+} s\ga \vp|\pt_\nu w|^2\df S\df t.
				\end{split}
			\end{equation}

			Now, we  estimate
			\begin{equation*}
				\iint_{\Sigma_+}  s\ga \vp  |\pt_\nu w|^2\df S\df t\leq C  \int_0^T\int_{\mcO(\Ga_+,3\de)}   s\ga \vp\left(|x|^\al \nabla u\cdot \nabla u+w_t^2+w^2\right)\df x\df t .
			\end{equation*}

			By Lemma \ref{L2.3}, there exists a $C^2$-vector field $V$ such that
			\begin{equation*}
				V(x)=\nu(x), \ x\in\pt\Om, \ \mbox{ and } |V(x)|\leq 1, \ x\in\Om.
			\end{equation*}
			Choose a function $\rho(x)\in C^2(\ol\Om)$ satisfying
			\begin{equation*}
				0\leq \rho\leq 1 \mbox{ on }\R^N,\quad \rho\equiv 1 \mbox{ on } \mcO(\Ga_+, \de)\cap \Om, \quad \rho=0 \mbox{ on }\Om\se \mcO(\Ga_+, 2\de).
			\end{equation*}
			Set
			\begin{equation*}
				g=V\rho .
			\end{equation*}
			Multiplying the first equality of \eqref{EQ-2} by  $g\cdot \nabla w$ and integrating over  $Q$, we obtain
			\begin{equation*}
				\begin{split}
					\iint_Q\left(\pt_t^2w-\Div(|x|^\al \nabla w)\right) (g\cdot \nabla w)\df x\df t=\iint_Q F(g\cdot \nabla w)\df x\df t.
				\end{split}
			\end{equation*}

			The proof follows a similar approach to the proof of Theorem \ref{T2.2}. Since
			$w(0)=w(T)=0$ and $w|_{\Sigma}=0$,  we derive
			\begin{eqnarray*}
				\iint_Q \pt_t^2 w(g\cdot \nabla w)\df x\df t
				&&=\iint_Q\pt_t\left(w_t(g\cdot \nabla w)\right)\df x\df t-\f{1}{2}\iint_Qg\cdot \nabla w_t^2\df x\df t \\
				&&=-\f{1}{2}\iint_Q \Div(g w_t^2)\df x\df t+\f{1}{2}\iint_Q (\Div g) w_t^2\df x\df t =\f{1}{2}\iint_Q (\Div g) w_t^2\df x\df t.
			\end{eqnarray*}
			Since
			\begin{eqnarray*}
				&&-\iint_Q\Div(|x|^\al \nabla w)(g\cdot \nabla w)\df x\df t\\
				&&=-\iint_Q\Div\left(|x|^\al \nabla w (g\cdot \nabla w)\right)\df x\df t+\iint_Q|x|^\al \nabla w\cdot \nabla (g\cdot \nabla w)\df x\df t\\
				&&=-\iint_\Sigma |x|^\al (\nabla w\cdot \nu)(g\cdot \nabla w)\df S\df t+\iint_Q |x|^\al \left((Dg\nabla w)\cdot \nabla w+\f{1}{2} g\cdot \nabla |\nabla w|^2\right)\df x\df t\\
				&&=-\iint_{\Sigma} |x|^\al \vp |\pt_\nu w|^2\df x\df t+ \iint_Q |x|^\al (Dg\nabla w)\cdot \nabla w\df x\df t\\
				&&\hspace{4.5mm}+\f{1}{2}\iint_Q\Div\left(|x|^\al g|\nabla w|^2\right)\df x\df t-\f{1}{2}\iint_Q \Div(|x|^\al g)|\nabla w|^2\df x\df t\\
				&&=-\f{1}{2}\iint_{\Sigma} |x|^\al \vp |\pt_\nu w|^2\df x\df t+ \iint_Q |x|^\al (Dg\nabla w)\cdot \nabla w\df x\df t-\f{1}{2}\iint_Q \Div(|x|^\al g)|\nabla w|^2\df x\df t.
			\end{eqnarray*}
			From the above, using the definition of $\rho$, we obtain
			\begin{equation}\label{10.03.7}
				\begin{split}
					\iint_{\Sigma_+} \vp|\pt_\nu w|^2\df x\df t
					&\leq C\iint_{\mcO(\Ga_+,2\de)\ts (0,T)} \vp\left(w_t^2+|x|^\al \nabla w\cdot \nabla w\right)\df x\df t+C\iint_Q\vp F^2\df x\df t\\
					&\leq C\iint_{\om\ts (0,T)}\vp\left(w_t^2+|x|^\al \nabla w\cdot \nabla w\right)\df x\df t+C\iint_Q\vp F^2\df x\df t.
				\end{split}
			\end{equation}
			Combining this with \eqref{A-2} and using classical arguments, we can transform back to the original variable $z$ and conclude the following result:
			\begin{equation}\label{4.5}
				\begin{split}
					&\iint_Qe^{2s\vp}s\ga \vp \left(|x|^\al \nabla z\cdot \nabla z+|z_t|^2+s^2\ga^2\vp^2|z|^2\right)\df x\df t\\
					&\leq C\iint_{\om\ts (0,T)}e^{2s\vp}s\ga \vp\left( z_t^2+|x|^\al \nabla z\cdot \nabla z +s^2\ga^2\vp^2|z|^2 \right)\df x\df t+C\iint_Qe^{2s\vp} F^2\df x\df t.
				\end{split}
			\end{equation}

			It is worth noting that the aforementioned conclusions are contingent upon the establishment of assumption \ref{*}. Subsequently, we shall delve into the discussion of acquiring equation \eqref{2.3} in the absence of assuming Condition \ref{*}.
			
			Let$z=z\kappa + z(1-\kappa)$, $\hat{z}=z(1-\kappa)$ , where
			$$
			\kappa \in C^\infty (\Omega), \quad \kappa \equiv 1 \mbox{ in } B\left(0,\frac{\varepsilon }{2}\right), \quad \kappa \equiv 0 \mbox{ in } \Om\backslash B(0,\varepsilon ), \quad 0\le \kappa \le 1.
			$$
			Then   $\hat{z} \subset \Om\backslash B(0,\varepsilon )$, and
			\begin{eqnarray*}
				\pt_t^2 \hat{z}-\Div\left(|x|^\alpha \nabla \hat{z}\right)
				&&=  \pt_t^2 z(1-\kappa)-\Div\left(|x|^\alpha \nabla\left[  z(1-\kappa)\right] \right)\\
				&&= \pt_t^2 z(1-\kappa)-\Div\left(|x|^\alpha \left[ \nabla z(1-\kappa) + z \nabla (1-\kappa) \right] \right)\\
				&&= \pt_t^2 z(1-\kappa)-\Div\left(|x|^\alpha \nabla z(1-\kappa)\right) -\Div\left(|x|^\alpha z \nabla (1-\kappa)  \right)\\
				&&= \pt_t^2 z(1-\kappa)-\Div\left(|x|^\alpha \nabla z \right)(1-\kappa) + |x|^\alpha \nabla z \cdot \nabla \kappa + \Div\left(|x|^\alpha z \nabla \kappa  \right)\\
				&&= F(1-\kappa) + |x|^\alpha \nabla z \cdot \nabla \kappa + \Div\left(|x|^\alpha z \nabla \kappa  \right)\\
				&&=:\hat{F}.
			\end{eqnarray*}
			Now, given that we have $\pt_t^2 \hat{z}-\Div\left(|x|^\alpha \nabla \hat{z}\right)=\hat{F}$ and $\hat{z}$
			satisfies \eqref{*}, we derive the following inequality:
			\begin{equation}\label{4.6}
				\begin{split}
					&\iint_Qe^{2s\vp}s\ga \vp \left(|x|^\al \nabla \hat{z}\cdot \nabla \hat{z}+|\hat{z}_t|^2+s^2\ga^2\vp^2|\hat{z}|^2\right)\df x\df t\\
					&\leq C\iint_{\om\ts (0,T)}e^{2s\vp}s\ga \vp\left( \hat{z}_t^2+|x|^\al \nabla \hat{z}\cdot \nabla \hat{z} +s^2\ga^2\vp^2|\hat{z}|^2 \right)\df x\df t+C\iint_Qe^{2s\vp} \hat{F}^2\df x\df t.
				\end{split}
			\end{equation}
			From this, we can deduce that
			\begin{eqnarray*}
				\iint_Qe^{2s\vp}s\ga \vp |x|^\al \nabla z\cdot \nabla z dx dt
				&=&\iint_Qe^{2s\vp}s\ga \vp |x|^\al \nabla \left( z\xi + \hat{z}\right) \cdot \nabla \left( z\xi + \hat{z}\right) dx dt\\
				&\le& \iint_Qe^{2s\vp}s\ga \vp |x|^\al \nabla \hat{z}\cdot \nabla \hat{z} dx dt + \iint_Qe^{2s\vp}s\ga \vp |x|^\al \nabla \left(z\xi \right) \cdot \nabla \left(z\xi \right) dx dt\\
				&+& 2 \iint_Qe^{2s\vp}s\ga \vp |x|^\al \nabla \left(z\xi \right) \cdot \nabla \hat{z} dx dt\\
				&\le& 2\iint_Qe^{2s\vp}s\ga \vp |x|^\al \nabla \hat{z}\cdot \nabla \hat{z} dx dt + 2\iint_Qe^{2s\vp}s\ga \vp |x|^\al \nabla \left(z\xi \right) \cdot \nabla \left(z\xi \right) dx dt.
			\end{eqnarray*}
			Similarly, we obtain
			\begin{equation*}
				\begin{split}
					\iint_Qe^{2s\vp}s\ga \vp |z_t|^2 dx dt
					&\le 2\iint_Qe^{2s\vp}s\ga \vp |(z \xi)_t | ^2 dx dt + 2\iint_Qe^{2s\vp}s\ga \vp |\hat{z}_t|^2  dx dt,
				\end{split}
			\end{equation*}
			and
			\begin{equation*}
				\begin{split}
					\iint_Qe^{2s\vp}s\ga \vp |z|^2 dx dt
					&\le 2\iint_Qe^{2s\vp}s\ga \vp |z\xi|^2 dx dt + 2\iint_Qe^{2s\vp}s\ga \vp |\hat{z}| ^2 dx dt,
				\end{split}
			\end{equation*}
			Utilizing \eqref{4.6} and the properties of  $\kappa$, we derive
			\begin{eqnarray*}
				&&\iint_Qe^{2s\vp}s\ga \vp \left(|x|^\al \nabla z\cdot \nabla z+|z_t|^2+s^2\ga^2\vp^2|z|^2\right)\df x\df t\\
				&&\le C\iint_Qe^{2s\vp}s\ga \vp \left(|x|^\al \nabla \hat{z}\cdot \nabla \hat{z}+|\hat{z}_t|^2+s^2\ga^2\vp^2|\hat{z}|^2\right)\df x\df t\\
				&&+
				C\iint_Qe^{2s\vp}s\ga \vp \left(|x|^\al \nabla (z \xi)\cdot \nabla (z \xi)+|(z \xi)_t|^2+s^2\ga^2\vp^2|z \xi|^2\right)\df x\df t\\
				&&\le
				C\iint_{\om\ts (0,T)}e^{2s\vp}s\ga \vp\left( \hat{z}_t^2+|x|^\al \nabla \hat{z}\cdot \nabla \hat{z} +s^2\ga^2\vp^2|\hat{z}|^2 \right)\df x\df t+C\iint_Qe^{2s\vp} \hat{F}^2\df x\df t\\
				&&+C\iint_{\om\ts (0,T)}e^{2s\vp}s\ga \vp\left( z_t^2+|x|^\al \nabla z\cdot \nabla z +s^2\ga^2\vp^2|z|^2 \right)\df x\df t\\
				&&\le C\iint_{\om\ts (0,T)}e^{2s\vp}s\ga \vp\left( z_t^2+|x|^\al \nabla z\cdot \nabla z +s^2\ga^2\vp^2|z|^2 \right)\df x\df t +C\iint_Qe^{2s\vp} \hat{F}^2\df x\df t.
			\end{eqnarray*}
			From this, we can deduce \eqref{2.3}. 		
		\end{proof}

		\section{Unique continuation and approximate controllability}\label{Se5}
		
		After establishing the Carleman estimate, we can readily draw conclusions regarding unique continuation and approximate controllability. Recall that our nonhomogeneous dual equation \eqref{EQ-2} is formulated as follows:
		\begin{equation*}
			\begin{cases}
				\pt_t^2z-\Div(|x|^\al \nabla z)=F, &\mbox{in } Q,\\
				z(0)=z_0, \pt_tz(0)=z_1, &\mbox{in }\pt Q,\\
				z=0, &\mbox{on }\Sigma.
			\end{cases}
		\end{equation*}
		Next, we present the following theorem, which is equivalent to establishing the unique continuation property given by
		\begin{equation}\label{10.03.1}
			E(0)\leq C\left( \|\pt_\nu z\|_{L^2(\Sigma)}+\|F\|_{L^2(Q)}^2\right)
		\end{equation}
		for some constant $C>0$ that  depends on the solution $z$ of \eqref{EQ-2}. Let
		\begin{equation*}
			T_0=\f{2}{\sqrt{\e^{\al+1}}}\max_{x\in\Om}|x|,\quad T\geq T_0.
		\end{equation*}
		We choose $\de>0$ and $0<\be<\f{\e^\al}{5}$ such that
		\begin{equation*}
			\e^{\al+1}T^2>4\max_{x\in \Om}|x|^2+4\de,
		\end{equation*}
		and
		\begin{equation*}
			\be T^2>4\max_{x\in\Om}|x|^2+4\de, \ 0<\be <\e^{\al+1}.
		\end{equation*}
		It is possible to select $T$ that satisfies the aforementioned conditions.
		
		\begin{theorem}\label{Carleman3}
			If $z$ is  a solution of \eqref{EQ-2} and  $\partial_{\nu} z=0$ on $\Sigma$, then $z=0$ in $Q$.
		\end{theorem}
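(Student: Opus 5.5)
Under the standing assumption that $F$ is the source in \eqref{EQ-2} (for unique continuation we take $F=0$, or more generally treat $F$ as part of the data, as in \eqref{10.03.1}), the plan is to combine the boundary form of the Carleman estimate, i.e. \eqref{A-2} before the boundary term is localized to $\om$, with the energy estimate of Remark \ref{R1}. This yields $E(0)\le C\|F\|$, which vanishes when $F=0$.

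\textbf{Step 1 (time cut-off).} The computations leading to \eqref{A-2} used $w(0)=w(T)=0$ and $w_t(0)=w_t(T)=0$. A general solution $z$ does not satisfy these, so we introduce a cut-off $\theta\in C_0^\infty(0,T)$ with $\theta\equiv1$ on $[T/2-\e_1 T,\,T/2+\e_1T]$, take $t_0=T/2$, and apply \eqref{A-2} to $\theta z$. Note that $\theta z$ solves \eqref{EQ-2} with source $\theta F+\theta''z+2\theta'z_t$. We also perform the spatial splitting $z=z\kappa+\hat z$ as at the end of the proof of Theorem \ref{Carleman2}. Near the origin, $\kappa z$ vanishes on $\Sigma$ together with its normal derivative, so it contributes no boundary term. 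Since $\pt_\nu z=0$ on $\Sigma$ and $\kappa$ is supported away from $\pt\Om$, the boundary integral over $\Sigma_+$ in \eqref{A-2} drops out.

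\textbf{Step 2 (weights).} This yields an estimate of the form
\[
\iint_Q e^{2s\vp}s\ga\vp\bigl(|x|^\al|\nabla z|^2+z_t^2+s^2\ga^2\vp^2z^2\bigr)\theta^2 \le C\iint_Q e^{2s\vp}F^2+C\iint_{\{\theta'\neq0\}}e^{2s\vp}\bigl(z^2+z_t^2\bigr),
\]
plus contributions from the $\kappa$-commutator terms near $0$. The choice $\be T^2>4\max|x|^2+4\de$ guarantees that on the set where $\theta'\ne0$ (near $t=0,T$) we have $\psi\le\be_0-\de$, whereas on a middle interval $\psi\ge\be_0$. Bounding $\vp$ from below on the middle interval and from above on the cut-off region, and using Remark \ref{R1} to bound both sides by $E(0)$ (up to $\|F\|_{L^2(Q)}$), we obtain
\[
e^{2se^{\ga\be_0}}E(0)^2\le C e^{2se^{\ga(\be_0-\de)}}E(0)^2+Ce^{Cs}\|F\|^2 .
\]
Letting $s$ be large absorbs the first term on the right, so $E(0)=0$ when $F=0$, and hence $z\equiv0$ by Theorem \ref{T2.2}.

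\textbf{Main obstacle.} The hardest step is the commutator terms from $\kappa$ near the degenerate point, namely $|x|^\al\nabla z\cdot\nabla\kappa+\Div(|x|^\al z\nabla\kappa)$, which live in $B(0,\e)\se B(0,\e/2)$. They are not small, and they are not controlled by boundary data. I would handle them by enlarging $\ga$ so that $\psi$ is minimized near the origin ($\psi=|x|^2+\dots$ is smallest at $x=0$). Their weighted contribution is then dominated by $e^{2s\vp}$ evaluated at $|x|\le\e$, which is exponentially smaller than the weight on the outer region where $|x|^2\ge\max|x|^2-$ small. Combined with the energy bound, this absorbs them as in Step 2. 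If that fails, the fallback is to apply the full Theorem \ref{Carleman2} with $\om$ only a neighborhood of $0$ plus $\mcO(\Ga_+,3\de)$. The boundary part is then eliminated by the multiplier identity \eqref{10.03.7} run in reverse, using $\pt_\nu z=0$. The neighborhood of $0$ is treated via the weight comparison above, and the conditions $\e^{\al+1}T^2>4\max|x|^2+4\de$ are precisely what make this comparison work.
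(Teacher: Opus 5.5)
Your overall architecture matches the paper's. You use a time cut-off, a spatial cut-off away from the degenerate point, and the boundary form \eqref{A-2} of the Carleman estimate with the $\Sigma_+$ term removed by $\pt_\nu z=0$. You then compare the weight near $t=T/2$ with the weight where the time cut-off is active, and finish with Remark \ref{R1}. The genuine gap is exactly at the step you call the main obstacle, and neither of your remedies closes it.

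After the splitting, \eqref{A-2} applied to $\theta\hat z$ sees $z$ only on $\Om\setminus B(0,\e)$. So the left-hand side of your Step 2 inequality cannot be bounded below by $E(0)$: the energy of $z$ inside $B(0,\e)$ is not there, and nothing in the hypotheses controls $\kappa z$. Moreover, the commutator terms live in $B(0,\e)\setminus B(0,\e/2)$ for every $t$ in the support of $\theta$, including $t=T/2$, where $\psi$ gets up to $\e^2+\be_0$. The region that must deliver the full energy is $(\Om\setminus B(0,\e))\times\{|t-T/2|\le\eta\}$, and the smallest value of $\psi$ there is $\e^2+\be_0-\be\eta^2$. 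So there is no exponential gap, whatever $\ga$ is: $\ga$ does not move the minimum of $\psi$, it only amplifies gaps that already exist.

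Comparing instead with the weight on the shell $|x|^2\ge\max_{\Om}|x|^2-\mu$, $\mu$ small, controls only the energy in that shell. Nothing, Remark \ref{R1} included, converts shell energy into $E(t)$.

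The fallback fails as well. \eqref{10.03.7} bounds the flux on $\Sigma_+$ by interior energy near $\Ga_+$, and it cannot be run backwards: the interior terms of the multiplier identity carry the sign-indefinite coefficients $\Div g$ and $Dg$. In any case \eqref{A-2} is already in boundary form, so nothing needs eliminating there. Putting a neighbourhood of $0$ into $\om$ would require an observation of $z$ near $0$, which $\pt_\nu z=0$ does not provide.

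The ingredient you are missing is \eqref{10.03.5}. For the given solution, the paper chooses $\e$, by absolute continuity of the energy density, so that $B(0,3\e)$ carries at most a quarter of the energy for $|t-T/2|\le\eta$. It uses $\zeta$ vanishing on $B(0,\e)$ and equal to $1$ outside $B(0,2\e)$, and keeps the left-hand side of \eqref{10.03.4} only on $\Om\setminus B(0,3\e)$. Then \eqref{10.03.5} recovers $E(t)$ from the energy outside $B(0,3\e)$ up to a constant factor.

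Separating the commutator support $B(0,2\e)\setminus B(0,\e)$ from the region where the energy is read off is also what makes absorption possible; the paper states this step in one line. With $\be_0=0$ one has $\psi\ge 9\e^2-\be\eta^2$ on $\{|x|\ge 3\e,\ |t-T/2|\le\eta\}$ and $\psi\le 4\e^2$ on $B(0,2\e)\times(0,T)$. Hence for $\be\eta^2<5\e^2$ the commutator terms, which are at most a constant times $e^{2se^{4\ga\e^2}}E(0)^2$, are absorbed for large $s$.

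Note that $\e$ then depends on the particular solution. The paper accepts this because the conclusion is qualitative. It does not address, however, that the Carleman parameters tied to $\e$, such as $\be<\f{2-\al}{5}\e^\al$ and hence the admissible $T$, then depend on $z$ as well.
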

		\begin{proof}			
			Assume $z$ is a solution of \eqref{EQ-2}. If $E(0)=0$, we have already proven the inequality \eqref{10.03.1}. Given the absolute continuity of
			$z_t\in L^2(Q), |x|^\al\nabla z\cdot \nabla z\in L^1(0,T; L^2(\Om))$, and $z\in L^2(Q)$, there exists $\varepsilon>0$
			such that $B(0,3\varepsilon)\s \Om$, and we have the following inequalities:
			\begin{equation*}
				\|z_t\|_{L^2(B(0,3\e)\ts (0,T))}\leq \f{1}{2}\|z_t\|_{L^2(Q)},\quad \||x|^\al \nabla z\cdot \nabla z\|_{L^1(0,T; B(0,3\e))}\leq \f{1}{2}\||x|^\al \nabla z\cdot \nabla z\|_{L^1(0,T; L^2(\Om))},
			\end{equation*}
			and
			\begin{equation*}
				\begin{split}
					&\|z_t(2^{-1}T)\|_{L^2(B(0,3\e)\ts (0,T))}\leq 2^{-1}\|z_t(2^{-1}T)\|_{L^2(Q)}, \\
					&\||x|^\al \nabla z(2^{-1}T)\cdot \nabla z(2^{-1}T)\|_{L^1(0,T; B(0,3\e))}\leq \f{1}{2}\||x|^\al \nabla z(2^{-1}T)\cdot \nabla z\|_{L^1(0,T; L^2(\Om))},
				\end{split}
			\end{equation*}
			as well as
			\begin{equation*}
				\|z\|_{L^2(B(0,3\e)\ts (0,T)}\leq \f{1}{2}\|z\|_{L^2(Q)}, \quad \|z(2^{-1}T)\|_{L^2(B(0,3\e)\ts (0,T)}\leq \f{1}{2}\|z(2^{-1}T)\|_{L^2(Q)}.
			\end{equation*}
			Define the function
			\begin{equation*}
				\psi(x,t)=|x|^2-\be\left(t-\f{T}{2}\right)^2.
			\end{equation*}
			It is evident that
			\begin{equation*}
				\psi(x,0)<-\de, \ \psi(x,T)<-\de \mbox{ for all } x\in \Om,
			\end{equation*}
			as well as
			\begin{equation*}
				\psi\left(x,\f{T}{2}\right)=|x|^2\geq 0, \ x\in \Om.
			\end{equation*}
			Hence, there exist constants $\eta>0$ and $ \wh \de>0$ such that
			\begin{alignat}{2}
				& \psi(x,t)\leq -2\wh \de, \mbox{ for all } x\in \Om,  t\in (0,2\eta)\cup (T-2\eta, T), \label{10.03.2}\\
				& \psi(x,t)\geq -\wh \de, \mbox{ for all }  x\in \Om, \left|t-\f{T}{2}\right|\leq \eta. \label{10.03.3}\\
				&
				\begin{cases}
					\|z_t(t)\|_{B(0,3\e)}\leq \f{1}{4}\|z_t(t)\|_{L^2(\Om)}, \\ \||x|^\al \nabla z(t)\cdot \nabla z(t)\|_{L^2(B(0,3\e)}\leq \f{1}{4}\||x|^\al \nabla z(t)\cdot \nabla z(t)\|, \\
					\|z(t)\|_{L^2(B(0,3\e))}\leq \f{1}{4}\|z(t)\|_{L^2(\Om)},
				\end{cases}\ \fa \left|t-\f{T}{2}\right|\leq \eta. \label{10.03.5}
			\end{alignat}
			Choose a cut-off function $\xi\in C^\iy(\R)$  such that
			$0\leq \xi\leq 1, \xi=1$ in $(2\eta, T-2\eta)$, and $\xi=0$ in $(0,\eta)\cup (T-\eta, T)$.   Additionally, select a cut-off function
			$\zeta\in C^\iy(\R^N)$ such that $0\leq \zeta\leq 1, \zeta=1$ in $\R^N-B(0,2\e)$, and $\zeta=0$ in $B(0,\e)$.

			Define the function
			\begin{equation*}
				w=\xi(t)\zeta(x) z(x,t), \ (x,t)\in Q.
			\end{equation*}
			It can be readily verified that
			\begin{equation*}
				\begin{cases}
					\hspace{-1.5mm}
					\begin{array}{lll}
						\pt_t^2w-\Div(|x|^\al \nabla w)=\xi\zeta F
						&+\hspace{1.5mm}
						2\xi_t(t)\zeta(x)z_t+\xi_{tt}(t)\zeta(x)z\\
						&+\hspace{1.5mm}2\xi(t)|x|^\al \nabla \zeta(x)\cdot \nabla z+\xi(t) \Div(|x|^\al \nabla \zeta(x))z,
					\end{array} & \mbox{in }Q,\\
					w(0)=w(T)=w_t(0)=w_t(T)=0, & \mbox{in }\Om,\\
					w=0, & \mbox{on }\Sigma.
				\end{cases}
			\end{equation*}
			Applying the Carleman estimate to the function $w$, we derive
			\begin{eqnarray*}
				&& \iint_Q  s\left(|x|^\al \nabla w\cdot \nabla w+s|w_t|^2+s^3|w|^2\right)\df x\df t\\
				&&\leq C\iint_Q \left|2\xi_t(t)\zeta(x)z_t+\xi_{tt}(t)\zeta(x)z\right|^2e^{2s\vp}\df x\df t+C\iint_Q|\xi\zeta F|^2 e^{2s\vp}\df x\df t\\
				&&\hspace{4.5mm}+C\iint_Q\left|2\xi(t)|x|^\al \nabla \zeta(x)\cdot \nabla z+\xi(t) \Div(|x|^\al \nabla \zeta(x))z\right|^2e^{2s\vp}\df x\df t+C\int_{\Sigma_+} s|\pt_\nu w|^2 \df S\df t
			\end{eqnarray*}
			for any $s\geq s_*$.  Consequently, for any  $s\geq s_*$, we have
			\begin{equation}\label{10.03.4}
				\begin{split}
					&\int_{\f{T}{2}-\eta}^{\f{T}{2}+\eta}\int_{\Om\se B(0,3\e)} s\left(|x|^\al \nabla z\cdot \nabla z +s|z_t|^2+s^3|z|^2\right)e^{2s\vp}\df x\df t\\
					&\leq C\iint_Q \left|2\xi_t(t)\zeta(x)z_t+\xi_{tt}(t)\zeta(x)z\right|^2e^{2s\vp}\df x\df t+C\iint_Q|\xi\zeta F|^2 e^{2s\vp}\df x\df t\\
					&\hspace{4.5mm}+C\iint_Q\left|2\xi(t)|x|^\al \nabla \zeta(x)\cdot \nabla z+\xi(t) \Div(|x|^\al \nabla \zeta(x))z\right|^2e^{2s\vp}\df x\df t+C\int_{\Sigma_+} s|\pt_\nu z|^2e^{2s\vp}\df S\df t.
				\end{split}
			\end{equation}
			Set
			\begin{equation*}
				A_0:=e^{-\ga\wh \de},\quad A_1:=e^{-2\ga \wh \de}.
			\end{equation*}
			Since $\xi_t,\xi_{tt}$ are supported in $(0,2\eta)\cup (T-2\eta, T)$, by \eqref{10.03.2}, we obtain
			\begin{equation*}
				\begin{split}
					&\iint_Q \left|2\xi_t(t)\zeta(x)z_t+\xi_{tt}(t)\zeta(x)z\right|^2e^{2s\vp}\df x\df t\leq \iint_Q \left(|z_t|^2+z^2\right) e^{2sA_1}\df x\df t\leq Ce^{2sA_1}\iint_Q\left(|z_t|^2+z^2\right)\df x\df t.
				\end{split}
			\end{equation*}
			Since $\nabla \zeta$ is supported in $B(0,2\e)\se B(0,\e)$, we obtain
			\begin{eqnarray*}
				&& \iint_Q\left|2\xi(t)|x|^\al \nabla \zeta(x)\cdot \nabla z+\xi(t) \Div(|x|^\al \nabla \zeta(x))z\right|^2e^{2s\vp}\df x\df t\\
				&&\leq  C \iint_{B(0,2\e)\ts (0,T)} \e^{-2}\left(|x|^\al \nabla z\cdot \nabla z+z^2\right) e^{2s\vp}\df x\df t.
			\end{eqnarray*}
			By the definition of  $\e$, if $s$ is sufficiently large, the relevant term can be absorbed by the left-hand side of \eqref{10.03.4}. Hence, we obtain
			\begin{eqnarray*}
				&&e^{2A_0s}\int_{\f{T}{2}-\eta}^{\f{T}{2}+\eta}\int_{\Om\se B(0,3\eta)}\left(|x|^\al \nabla z\cdot \nabla z+z_t^2\right)\df t\\
				&&\leq Ce^{Cs}\int_{\Sigma_+}|\pt_\nu z|^2\df S\df t+Ce^{Cs}\|F\|_{L^2(Q)}^2+Ce^{2A_1s}\iint_Q\left(|z_t|^2+z^2\right)\df x\df t.
			\end{eqnarray*}
			By Remark \ref{R1} and \eqref{10.03.5}, we have
			\begin{eqnarray*}
				e^{2A_0s}\int_{\f{T}{2}-\eta}^{\f{T}{2}+\eta} E(t)\df t
				&&\leq 4e^{2A_0s}\int_{\f{T}{2}-\eta}^{\f{T}{2}+\eta}\int_{\Om\se B(0,3\e)}\left(|x|^\al \nabla z\cdot \nabla z+z_t^2\right)\df t\\
				&&\leq Ce^{Cs}\|\pt_\nu z\|_{L^2(\Sigma_+)}^2+Ce^{Cs}\|F\|_{L^2(Q)}^2+Ce^{2A_1s}E(0)T.
			\end{eqnarray*}
			Again, by Remark \ref{R1}, we derive
			\begin{equation*}
				E(0)\leq C(E(t)+\|F\|_{L^2(Q)}^2),
			\end{equation*}
			which implies
			\begin{equation*}
				E(0)-C\|F\|_{L^2(Q)}^2\leq CE(t), \ t\in [0,T].
			\end{equation*}
			Therefore,
			\begin{equation*}
				2\eta E(0)e^{2A_0s}-2C\eta \|F\|_{L^2(Q)}^2e^{2A_0s}\leq Ce^{Cs}\|\pt_\nu z\|_{L^2(\Sigma_+)}^2+Ce^{Cs}\|F\|_{L^2(Q)}^2+Ce^{2A_1s}E(0)T.
			\end{equation*}
			This leads to
			\begin{equation*}
				E(0)=C\left(e^{Cs}\|\pt_\nu z\|_{L^2(\Sigma_+)}^2+e^{Cs}\|F\|_{L^2(Q)}^2+e^{-2(A_0-A_1)s}E(0)\right),
			\end{equation*}
			and hence,
			\begin{equation*}
				E(0)\leq C\left(\|\pt_\nu z\|_{L^2(\Sigma_+)}^2+\|F\|_{L^2(Q)}^2\right)
			\end{equation*}
			by choosing $s$ large enough  such that
			\begin{equation*}
				Ce^{-2(A_0-A_1)s}\leq \f{1}{2}.
			\end{equation*}
			This proves \eqref{10.03.1}.
		\end{proof}
		
		Similarly, proving Theorem \ref{Carleman5} is equivalent to demonstrating the following unique continuation property:
		\begin{equation*}
			E(0)\leq C\left(\iint_{\om\ts (0,T)} z^2\df x\df t+\|F\|_{L^2(Q)}^2\right).
		\end{equation*}
		for some constant $C>0$ that  depends on the solution $z$ of \eqref{EQ-2}.
		\begin{proof}[\rm \bf{{\bf Proof of Theorem \ref{Carleman5}.}}]	
			We start with \eqref{10.03.4}. By applying \eqref{10.03.7}, we obtain the following inequality:
			\begin{eqnarray}\label{10.03.6}
				&&\int_{\f{T}{2}-\eta}^{\f{T}{2}+\eta}\int_{\Om\se B(0,3\e)} s\left(|x|^\al \nabla z\cdot \nabla z +|z_t|^2+|z|^2\right)e^{2s\vp}\df x\df t \nonumber\\
				&&\leq C\iint_Q \left|2\xi_t(t)\zeta(x)z_t+\xi_{tt}(t)\zeta(x)z\right|^2e^{2s\vp}\df x\df t+C\iint_Q|\xi\zeta F|^2 e^{2s\vp}\df x\df t\nonumber \\
				&&\hspace{4.5mm}+C\iint_Q\left|2\xi(t)|x|^\al \nabla \zeta(x)\cdot \nabla z+\xi(t) \Div(|x|^\al \nabla \zeta(x))z\right|^2e^{2s\vp}\df x\df t+C\int_{\Sigma_+} s|\pt_\nu z|^2e^{2s\vp}\df S\df t
				\nonumber \\
				&&\leq Ce^{2A_1s}\iint_Q\left(z_t^2+z^2\right)\df x\df t+Ce^{Cs}\|F\|_{L^2(Q)}^2 \nonumber\\
				&&\hspace{4.5mm} +C\iint_{\om\ts (0,T)}s\left(|x|^\al \nabla z\cdot \nabla z+|z_t|^2+|z|^2\right)\df x\df t.
			\end{eqnarray}
			Next, we derive the following inequality:
			\begin{eqnarray*}
				&& e^{2sA_0}\int_{\f{T}{2}-\eta}^{\f{T}{2}+\eta}\int_\Om \left(|x|^\al \nabla z\cdot \nabla z +|z_t|^2+|z|^2\right)e^{2s\vp}\df x\df t\\
				&&\leq Ce^{2A_1s}\iint_Q\left(z_t^2+z^2\right)\df x\df t+Ce^{Cs}\|F\|_{L^2(Q)}^2\\
				&&\hspace{4.5mm} +C\iint_{\om\ts (0,T)}s\left(|x|^\al \nabla z\cdot \nabla z+|z_t|^2+|z|^2\right)\df x\df t.
			\end{eqnarray*}
			By using similar reasoning and manipulations, we can conclude that
			\begin{equation*}
				E(0)\leq C\left(\iint_{\om\ts (0,T)} z^2\df x\df t+\|F\|_{L^2(Q)}^2\right).
			\end{equation*}
			This completes the proof of the theorem.
		\end{proof}
		
		\section{Observability inequality and exact controllability}\label{Se6}

		In this section, we derive the observability inequality for the system described by \eqref{EQ-1} and establish its exact controllability. We define the problem \eqref{EQ-1} to be exactly controllable if, for any initial conditions
		$(u_0,u_1)\in L^2(\Omega) \times \mcH^{-1}(\Om)$, there exists a unique solution to \eqref{EQ-1} that starts from
		$(u_0,u_1)$  and satisfies the terminal conditions:
		\begin{equation*}
			u(\cdot,T)=0 \ \mbox{and} \ u_t (\cdot,T)=0 \ \mbox{in} \ \Omega.
		\end{equation*}

		First, we introduce the operator $J$ defined as
		\begin{equation*}
			J: \mcH=\mcH_0^1(\Om) \times L^2(\Omega) \to \mcH^{-1}(\Om) \times L^2(\Omega)=\mcH'
		\end{equation*}
		where $J$ maps $(z_0,z_1)$ to:
		\begin{equation*}
			J(z_0,z_1):=\left( u_t (\cdot,0), -u(\cdot,0)\right) .
		\end{equation*}
		Here, $\mcH'$  denotes the dual space of  $\mcH$, with the duality pairing between
		$\mcH$ and $\mcH'$,   denoted by $\left\langle  \cdot,\cdot \right\rangle_{\mcH,\mcH'}$. Note that we identify the dual of
		$L^2(\Omega) \times L^2(\Omega)$  with itself.

		\begin{proof}[\rm \bf{{\bf Proof of Theorem \ref{Carleman4}.}}]	We demonstrate the controllability of the system on the domain
			$\om$.
			
			Assume $0\in\om$. According to Theorem \ref{T2.2}, the following system:
			\begin{equation*}
				\begin{cases}
					\pt_t^2u-\Div(|x|^\al \nabla u)=\chi_\omega f, &\mbox{in } Q,\\
					u(\cdot,T)=0, \pt_tu(\cdot,T)=0, &\mbox{in }\pt Q,\\
					u=0, &\mbox{on }\Sigma,
				\end{cases}
			\end{equation*}
			admits a unique solution $u$ that satisfies:
			\begin{equation*}
				u(\cdot,0)\in L^2(\Omega) \ \mbox{and} \ u_t (\cdot,0)\in \mcH^{-1}(\Om),
			\end{equation*}			
			By setting $f=\chi_\om z$,  and multiplying $z$ to the first equation of \eqref{EQ-1}, we derive
			\begin{eqnarray*}
				&&\iint_Q\left(\pt_t^2u-\Div(|x|^\al \nabla u)\right)z\df x\df t=\iint_Q \chi_\om z^2\df x\df t\\
				=&& \int_{\Omega} \left( z u_t - z_t u \right) \df x \bigg|_0^T
				+\int_0^T \int_{\Omega} (\pt_t^2z -\Div(|x|^\al \nabla z) u \df x\df t + \iint_{\Sigma} u |x|^\al \nabla z \cdot \nu - z |x|^\al \nabla u \cdot \nu  \df S \df t\\
				=&& \int_{\Omega} \left( z_t(\cdot,0) u(\cdot,0) - z(\cdot,0) u_t(\cdot,0) \right) \df x =\iint_Q \chi_\om z^2\df x\df t.
			\end{eqnarray*}
			Therefore,
			\begin{equation*}
				\begin{split}
					\left\langle J(z_0,z_1), (z_0,z_1)\right\rangle _{\mcH,\mcH'}= \iint_Q \chi_\om z^2\df x\df t.
				\end{split}
			\end{equation*}
			By Theorem \ref{T2.2}, we have
			\begin{equation*}
				\begin{split}
					\left\langle J(z_0,z_1), (z_0,z_1)\right\rangle _{\mcH,\mcH'}\ge C \|(z_0,z_1)\|^2_{\mcH} .
				\end{split}
			\end{equation*}
			Applying the Lax-Milgram theorem to the linear operator $J$, and following the standard proofs in \cite{zuazua}, we conclude the exact controllability of equation \eqref{EQ-1}.
			%
		\end{proof}
		
		%
		%
		%
		%

\section{Concluding Remarks}\label{Se7}

 In this study, we establish the controllability for a class of high-dimensional hyperbolic equations characterized by a single interior degenerate point. This is achieved by employing the Carleman estimate method to derive the corresponding observability inequality. It is important to note that the selection of the multiplier is highly contingent upon the position of the degenerate point. In scenarios involving two or more interior degenerate points, the methodology for constructing an appropriate multiplier remains ambiguous. A plausible approach is to isolate the degenerate interior points, thereby enabling the energy of the degenerate hyperbolic equation to be controlled through both the boundary control domain and the isolated domains surrounding the interior degeneracies. Nevertheless, the rigorous implementation of this strategy remains an open question.

Future research may investigate the case where the weight function $w$ features countably many interior degenerate points $\{x_n\}_{n \in \mathbb{N}}$. In such cases, we select a control domain $\omega$ such that
\[
\omega \supset \bigcup_{n \in \mathbb{N}} B(x_n, r_n),
\]
where $B(x_n, r_n) \cap B(x_m, r_m) = \emptyset$ for all distinct $m, n \in \mathbb{N}$. Alternatively, the set $\{x_n\}_{n\in\mathbb{N}}$ could represent all rational points on $\mathbb{S}^{N-1}\setminus\Omega$. Despite these considerations, the challenge of constructing an appropriate multiplier persists as an unresolved issue.

	\end{document}